\definecolor{blau}{rgb}{0.1,0.0,0.9}
\definecolor{gruen}{cmyk}{1.0,0.2,0.7,0.07}
\definecolor{mag}{cmyk}{0.0,0.9,0.3,0.0}
\newcommand{\klockan}{\the\hours:{\ifnum\minutes<10 0\fi}\the\minutes}
\newcommand{\tid}{\today\ \klockan}
\newcommand{\prtid}{\smash{\raise 10mm \hbox{\LaTeX ed \tid}}}
\def\sectionmark#1{} 
\def\subsectionmark#1{}
\newcommand{\sectnr}{\ifnum \c@secnumdepth >\z@
                 \thesection.\hskip 1em\relax \fi}
\def\@evenhead{\footnotesize\rm\thepage\hfil\leftmark\hfil\llap{\prtid}}
\def\@oddhead{\footnotesize\rm\rlap{\prtid}\hfil\rightmark\hfil\thepage}
\def\tableofcontents{\section*{Contents} 
 \@starttoc{toc}}
\def\@biblabel#1{#1.}
\let\Thebibliography=\thebibliography
\renewcommand{\thebibliography}[1]{\def\@mkboth##1##2{}\Thebibliography{#1}
\addcontentsline{toc}{section}{References}
\frenchspacing 
\setlength{\@topsep}{0pt}
\setlength{\itemsep}{0pt}%
\setlength{\parskip}{0pt plus 2pt}%
}
\def\mdots@{\mathinner.\nonscript\!.%
 \ifx\next,.\else\ifx\next;.\else\ifx\next..\else
 \nonscript\!\mathinner.\fi\fi\fi}
\let\ldots\mdots@
\let\cdots\mdots@
\let\dotso\mdots@
\let\dotsb\mdots@
\let\dotsm\mdots@
\let\dotsc\mdots@
\def\vdots{\vbox{\baselineskip2.8\p@ \lineskiplimit\z@
    \kern6\p@\hbox{.}\hbox{.}\hbox{.}\kern3\p@}}
\def\ddots{\mathinner{\mkern1mu\raise8.6\p@\vbox{\kern7\p@\hbox{.}}%
    \raise5.8\p@\hbox{.}\raise3\p@\hbox{.}\mkern1mu}}
\let\Enumerate=\enumerate
\renewcommand{\enumerate}{\Enumerate%
\setlength{\@topsep}{0pt}
\setlength{\itemsep}{0pt}%
\setlength{\parskip}{0pt plus 1pt}%
\renewcommand{\theenumi}{\textup{(\alph{enumi})}}%
\renewcommand{\labelenumi}{\theenumi}%
}
\let\endEnumerate=\endenumerate
\renewcommand{\endenumerate}{\endEnumerate\unskip}
\def\@seccntformat#1{\csname the#1\endcsname.\quad}
\long\def\@makecaption#1#2{%
  \vskip\abovecaptionskip
  \sbox\@tempboxa{ #1. #2}%
  \ifdim \wd\@tempboxa >\hsize
    #1. #2\par
  \else
    \global \@minipagefalse
    \hb@xt@\hsize{\hfil\box\@tempboxa\hfil}%
  \fi
  \vskip\belowcaptionskip}
\newcommand{\authortitle}[2]{\author{#1}\title{#2}\markboth{#1}{#2}}
\newcommand{\art}[6]{{\sc #1, \rm #2, \it #3 \bf #4 \rm (#5), \mbox{#6}.}}
\newcommand{\book}[3]{{\sc #1, \it #2, \rm #3.}}
\newcommand{\AND}{{\rm and }}
\newtheoremstyle{descriptive}%
  {\topsep}   
  {\topsep}   
  {\rmfamily} 
  {}          
  {\bfseries} 
  {.}         
  { }         
  {}          
\newtheoremstyle{propositional}%
  {\topsep}   
  {\topsep}   
  {\itshape}  
  {}          
  {\bfseries} 
  {.}         
  { }         
  {}          
\newtheoremstyle{remarkstyle}%
  {\topsep}   
  {\topsep}   
  {\rmfamily}  
  {}          
  {\itshape} 
  {.}         
  { }         
  {}          
\theoremstyle{propositional}
\newtheorem{thm}{Theorem}[section]
\newtheorem{prop}[thm]{Proposition}
\newtheorem{lem}[thm]{Lemma}
\newtheorem{cor}[thm]{Corollary}
\theoremstyle{descriptive}
\newtheorem{deff}[thm]{Definition}
\newtheorem{example}[thm]{Example}
\newtheorem{assumption}[thm]{Assumption}
\newtheorem{remk}[thm]{Remark}
\renewenvironment{proof}[1][\proofname]{\par
  \pushQED{\qed}%
  \normalfont
  \trivlist
  \item[\hskip\labelsep
        \itshape
    #1\@addpunct{.}]\ignorespaces
}{%
  \popQED\endtrivlist\@endpefalse
}
\newcommand{\AppendicesFromNowOn}
  {\renewcommand \thesection {\@Alph\c@section}  
\newcommand\Appendix{\@startsection {section}{1}{\z@}%
                                   {-3.5ex \@plus -1ex \@minus -.2ex}%
                                   {2.3ex \@plus.2ex}%
                                   {\normalfont\Large\bfseries Appendix }}
   \setcounter{section}{0}
  }
\newdimen\extrawidth
\def\iintlim#1#2{\setbox0\hbox{$\scriptstyle#1$}%
        \setbox1\hbox{$\scriptstyle#2$}%
        \extrawidth=\wd1 \advance\extrawidth-\wd0
        \ifdim\extrawidth<0pt \extrawidth=0pt\fi%
        \int_{#1\kern\extrawidth \kern .5em}^{#2\kern -\wd1} \kern -.5em%
}
\renewcommand{\subsetneq}{\varsubsetneq}
\renewcommand{\emptyset}{\varnothing}
\def\vint{\mathop{\mathchoice%
          {\setbox0\hbox{$\displaystyle\intop$}\kern 0.22\wd0%
           \vcenter{\hrule width 0.6\wd0}\kern -0.82\wd0}%
          {\setbox0\hbox{$\textstyle\intop$}\kern 0.2\wd0%
           \vcenter{\hrule width 0.6\wd0}\kern -0.8\wd0}%
          {\setbox0\hbox{$\scriptstyle\intop$}\kern 0.2\wd0%
           \vcenter{\hrule width 0.6\wd0}\kern -0.8\wd0}%
          {\setbox0\hbox{$\scriptscriptstyle\intop$}\kern 0.2\wd0%
           \vcenter{\hrule width 0.6\wd0}\kern -0.8\wd0}}%
          \mathopen{}\int}
\def\vintslides{\mathop{\mathchoice%
          {\setbox0\hbox{$\displaystyle\intop$}\kern 0.22\wd0%
           \vcenter{\hrule height 0.04em width 0.6\wd0}\kern -0.82\wd0}%
          {\setbox0\hbox{$\textstyle\intop$}\kern 0.2\wd0%
           \vcenter{\hrule height 0.04em width 0.6\wd0}\kern -0.8\wd0}%
          {\setbox0\hbox{$\scriptstyle\intop$}\kern 0.2\wd0%
           \vcenter{\hrule height 0.04em width 0.6\wd0}\kern -0.8\wd0}%
          {\setbox0\hbox{$\scriptscriptstyle\intop$}\kern 0.2\wd0%
           \vcenter{\hrule height 0.04em width 0.6\wd0}\kern -0.8\wd0}}%
          \mathopen{}\int}
\newcommand{\Cp}{{C_p}}
\newcommand{\bCp}{{\protect\itoverline{C}_p}}
\DeclareMathOperator{\diam}{diam}
\DeclareMathOperator{\dist}{dist}
\DeclareMathOperator{\Lip}{Lip}
\DeclareMathOperator{\spt}{supp}
\newcommand{\supp}{\spt}
\newcommand{\bdry}{\partial}
\newcommand{\bdy}{\bdry}
\newcommand{\pip}{\varphi}
{\catcode`p =12 \catcode`t =12 \gdef\eeaa#1pt{#1}}      
\def\accentadjtext#1{\setbox0\hbox{$#1$}\kern   
                \expandafter\eeaa\the\fontdimen1\textfont1 \ht0 }
\def\accentadjscript#1{\setbox0\hbox{$#1$}\kern 
                \expandafter\eeaa\the\fontdimen1\scriptfont1 \ht0 }
\def\accentadjscriptscript#1{\setbox0\hbox{$#1$}\kern   
                \expandafter\eeaa\the\fontdimen1\scriptscriptfont1 \ht0 }
\def\accentadjtextback#1{\setbox0\hbox{$#1$}\kern       
                -\expandafter\eeaa\the\fontdimen1\textfont1 \ht0 }
\def\accentadjscriptback#1{\setbox0\hbox{$#1$}\kern     
                -\expandafter\eeaa\the\fontdimen1\scriptfont1 \ht0 }
\def\accentadjscriptscriptback#1{\setbox0\hbox{$#1$}\kern 
                -\expandafter\eeaa\the\fontdimen1\scriptscriptfont1 \ht0 }
\def\itoverline#1{{\mathsurround0pt\mathchoice
        {\rlap{$\accentadjtext{\displaystyle #1}
                \accentadjtext{\vrule height1.593pt}
                \overline{\phantom{\displaystyle #1}
                \accentadjtextback{\displaystyle #1}}$}{#1}}
        {\rlap{$\accentadjtext{\textstyle #1}
                \accentadjtext{\vrule height1.593pt}
                \overline{\phantom{\textstyle #1}
                \accentadjtextback{\textstyle #1}}$}{#1}}
        {\rlap{$\accentadjscript{\scriptstyle #1}
                \accentadjscript{\vrule height1.593pt}
                \overline{\phantom{\scriptstyle #1}
                \accentadjscriptback{\scriptstyle #1}}$}{#1}}
        {\rlap{$\accentadjscriptscript{\scriptscriptstyle #1}
                \accentadjscriptscript{\vrule height1.593pt}
                \overline{\phantom{\scriptscriptstyle #1}
                \accentadjscriptscriptback{\scriptscriptstyle #1}}$}{#1}}}}
\def\itunderline#1{{\mathsurround0pt\mathchoice
        {\rlap{$\underline{\phantom{\displaystyle #1}
                \accentadjtextback{\displaystyle #1}}$}{#1}}
        {\rlap{$\underline{\phantom{\textstyle #1}
                \accentadjtextback{\textstyle #1}}$}{#1}}
        {\rlap{$\underline{\phantom{\scriptstyle #1}
                \accentadjscriptback{\scriptstyle #1}}$}{#1}}
        {\rlap{$\underline{\phantom{\scriptscriptstyle #1}
                \accentadjscriptscriptback{\scriptscriptstyle #1}}$}{#1}}}}
\newcommand{\ga}{\gamma}
\newcommand{\eps}{\varepsilon}
\newcommand{\Om}{\Omega}
\newcommand{\clOmP}{{\overline{\Om}\mspace{1mu}}^P}
\newcommand{\CapPp}{{\overline{C}\mspace{1mu}}^P_{p}}
\newcommand{\bdP}{{\bdy_{\rm P}}}
\newcommand{\bdySP}{\partial_{\rm SP}}
\newcommand{\bdyRSP}{\partial_{\rm RSP}}
\newcommand{\Irr}{{\rm Irr}}
\renewcommand{\phi}{\varphi}
\newcommand{\p}{{$p\mspace{1mu}$}}
\newcommand{\R}{\mathbb{R}}
\newcommand{\N}{\mathbb{N}}
\newcommand{\eR}{{\overline{\R}}}
 \newcommand{\distMa}{{\rm dist}_{\rm{M}}}
\newcommand{\limminus}{{\mathchoice{\raise.17ex\hbox{$\scriptstyle -$}}
                {\raise.17ex\hbox{$\scriptstyle -$}}
                {\raise.1ex\hbox{$\scriptscriptstyle -$}}
                {\scriptscriptstyle -}}}
\newcommand{\limplus}{{\mathchoice{\raise.17ex\hbox{$\scriptstyle +$}}
                {\raise.17ex\hbox{$\scriptstyle +$}}
                {\raise.1ex\hbox{$\scriptscriptstyle +$}}
                {\scriptscriptstyle +}}}
\newcommand{\Np}{N^{1,p}}
\newcommand{\Npz}{N_{0}^{1,p}}
\newcommand{\uHp}{\itoverline{P}}   
\newcommand{\lHp}{\itunderline{P}}  
\newcommand{\A}{\mathcal{A}}%
\newcommand{\UU}{\mathcal{U}}%
\newcommand{\LL}{\mathcal{L}}%
\newcommand{\g}{\gamma}
\newcommand{\dM}{d_M}
\newcounter{komcounter}
\numberwithin{komcounter}{section}
\newcommand{\setcurrentlabel}[1]{\def\@currentlabel{#1}}
\numberwithin{equation}{section}
\newenvironment{ack}{\medskip{\it Acknowledgement.}}{}
\begin{document}

\authortitle{T.~Adamowicz
and N.~Shanmugalingam}
{Prime end capacity of inaccessible prime ends, resolutivity, Kellogg property}
\title{The prime end capacity of inaccessible prime ends, resolutivity, and the Kellogg property}
\author{Tomasz Adamowicz\small{$^1$}
\\
\it \small The Institute of Mathematics, Polish Academy of Sciences,\\
\it \small \'Sniadeckich 8, Warsaw, 00-656, Poland\/{\rm ;}
\it \small tadamowi@impan.pl\\
\\
Nageswari Shanmugalingam\small{$^2$}
\\
\it \small  Department of Mathematical Sciences, University of Cincinnati, \\
\it \small  P.O.\ Box 210025, Cincinnati, OH 45221-0025, U.S.A.\/{\rm ;}
\it \small shanmun@uc.edu
}

\date{}
\maketitle

\footnotetext[1]{T. Adamowicz was supported by a grant Iuventus Plus of the Ministry of Science and Higher 
Education of the Republic of Poland, Nr 0009/IP3/2015/73.}
\footnotetext[2]{N. Shanmugalingam's research was partially supported by the NSF (U.S.A.) grant DMS~\#1500440.}

\noindent{\small
{\bf Abstract}. Prime end boundaries $\bdP\Om$ of domains $\Om$ are studied in the setting of complete doubling metric measure 
spaces supporting a $p$-Poincar\'e inequality. Notions of rectifiably (in)accessible- and (in)finitely far away prime 
ends are introduced and employed in classification of prime ends.  We show that, for a given domain, the prime 
end capacity (defined in~\cite{ES}) of the collection of all rectifiably inaccessible prime ends together will all
non-singleton prime ends is zero.  We show the resolutivity of 
continouous functions on $\bdP\Om$ which are Lipschitz
continuous with respect to the Mazurkiewicz metric when restricted to the collection $\bdySP\Om$ of all  
accessible prime ends. Furthermore, bounded perturbations of such functions in $\bdP\Om\setminus\bdySP\Om$
yield the same Perron solution. 
In the final part of the paper, we demonstrate
the (resolutive) Kellogg property with respect to the prime end boundary of bounded domains in the metric space.
Notions given in this paper are illustrated by a number of examples.
}

\bigskip
\noindent
{\small \emph{Key words and phrases}: capacity, doubling measure, Dirichlet problem, Ma\-zur\-kie\-wicz distance,  
metric measure spaces, $p$-harmonic functions, Perron method, Poincar\'e inequality, prime end boundary, resolutivity,
Kellogg property.
}

\medskip
\noindent
{\small Mathematics Subject Classification (2010): Primary: 31E05; Secondary: 31B15, 31B25, 31C15, 30L99
}

\section{Introduction}

  In the classical Dirichlet problem, given a differential operator $L$, one asks whether 
  there is a solution $u$ to the equation $Lu=0$ in $\Om$ satisfying a prescribed boundary condition $u=f$ on 
  $\partial \Om$. Here, for continuous functions $f:\partial \Om\to\R$ 
  we say that $u=f$ on $\partial\Om$ if for almost every (in a capacitary sense) $x\in\partial\Om$ we have
  \begin{equation}\label{eq:strong-bdry}
   \lim_{\Om\ni y\to x}u(y)=f(x).
  \end{equation}
  This problem has been investigated for various types of PDEs and settings, including the Laplace 
  equation and its nonlinear counterpart, the $p$-Laplace equation. 
  In this paper we will focus on application to the $p$-harmonic equation, but the prime end boundary 
  theory is accessible for a wider class of PDEs.
  In the Euclidean setting of a domain
  $\Om\subset \R^n$ the $p$-Laplace equation reads:
\[ 
 {\rm div}(|\nabla u|^{p-2}\nabla u)=0,
\]
where $u$ belongs to the Sobolev space $W^{1,p}_{loc}(\Om)$ and $1<p<\infty$. Moreover, 
for more general functions $f:\Om\to\R$ such that $f\in W^{1,p}_{loc}(\Om)$ the Dirichlet problem is 
understood in the weak sense, i.e. $u-f\in W^{1,p}_{0}(\Om, \R)$. However, the 
boundary value problem studied with respect to the topological boundary of the domain is often too restrictive 
and the corresponding solution does not fully capture the geometry of the domain. This is the case of the planar 
disc with a radial line removed (the so-called slit disc), the topologists' comb (and its higher dimensional 
generalizations) and many other domains with nontrivial boundary, see~\cite{BjComb, bbs1}. For example,
in the case of the slit disc, every point of the slit (except for its tip) has prescribed a single boundary value, 
even though it might be more desirable to have different prescribed behavior of the solution when approaching
the slit from above and from below. In this situation, 
it would be more natural to associate two boundary points to each point on the slit. 
Boundaries of such domains are described more effectively by other forms than just the topological boundary.
There are several notions of abstract boundaries, eg.~the 
Martin, Royden, and prime end boundaries. The prime end boundaries, as considered in~\cite{ES}, form the focus of this paper.  
  
 A formulation of prime ends was initiated by Carathe\'odory in 1913 for simply-connected planar domains in the 
 setting of boundary extension of conformal mappings, see Section~\ref{sec:pe} below for more information and a 
 brief historical account of prime ends. Carathe\'odory's construction of prime ends is not productive for multiply connected
 planar domains and for more general domains in metric measure spaces.
 Here we study prime ends in the context of bounded domains in complete metric measure spaces 
 equipped with a doubling measure and supporting a $p$-Poincar\'e inequality. This set of assumptions allows for a 
 viable first order Calculus, in particular various counterparts of the Sobolev spaces are available, see~\cite{he07}. 
 Furthermore, in this setting a variational analog of the $p$-Laplacian is available and the nonlinear potential theory 
 for $p$-harmonic functions in metric measure spaces is well developed, see~\cite{BBbook, bbs1, BBS2}. 
 
 Two of the cornerstones of the Perron method are (a)~that every continuous boundary data is resolutive and
 (b)~the \emph{Kellogg property}. The Kellogg property is that there
 is a subset of the topological boundary of the given domain, with zero $p$-capacity, 
 such that~\eqref{eq:strong-bdry} is satisfied by each continuous data on the topological boundary $\partial\Om$
 outside of this subset; see~\cite{Hed, HeKiMa} for the Euclidean setting, 
 and~\cite{BBS} for a proof of the Kellogg property in the metric setting. 
 Thus $p$-capacitary almost every point on the boundary of the domain influences the behavior of 
 Perron solutions. 
 
 In the case of the double comb (see Example~\ref{ex2}), the one-dimensional almost
 every point in the interval $[1/4,3/4]\times\{0\}$ will influence the behavior of the solution, and modifying the boundary
 data on this subset of the (topological) boundary might result in a different Perron solution. 
 As a consequence of one of the main results of this paper, Theorem~\ref{prop:exceptional},
 we know that for Perron solutions constructed using the prime end boundary instead of the topological boundary,
 the subset $[1/4,3/4]\times\{0\}$ will not affect the solution; this confirms the results of~\cite{BjComb}. 
 In Theorem~\ref{prop:exceptional} we identify a subset of prime ends 
 whose prime end capacity is zero and do not influence the Perron solution. 
 
 It turns out that the key to understanding which prime ends are vital for the solvability of the Dirichlet problem lies 
 in accesibility of points in the prime end impressions through rectifiable curves. This approach 
 allows us to classify prime ends into four categories as: 
(a)~\emph{rectifiably accessible},
(b)~\emph{rectifiably inacessible},
(c)~\emph{infinitely far away}, and
(d)~\emph{finitely away}.  The first two categories of prime ends belong to the class $\bdySP\Om$ of singleton prime ends, and the latter two belong to the class of non-singleton prime ends.
 Similar classifications are known in the theory 
 of prime ends of Carath\'eodory in $\R^2$, see \cite[Chapter 9]{cl} and the prime ends constructed by 
 N\"akki in $\R^n$, see~\cite[Section 8]{na}. 
 The Kellogg property for the  prime end boundary of domains whose prime end boundary consists solely of
 singleton prime ends has been studied in~\cite{Bj17}, where the property is verified for resolutive continuous functions
 on the boundary, and such a property is called the \emph{resolutive Kellogg property} in~\cite{Bj17}. In this paper
 we extend this property to domains whose prime end boundary might have more than just singleton prime ends.
 The proof in~\cite{Bj17} crucially uses the compactness of the singleton prime end boundary of the domain, and if
 the domain has more than just singleton prime ends in its prime end boundary, such compactness \emph{must} fail.
 Hence we do not follow the method in~\cite{Bj17} but go back to the basics of the argument found in~\cite{BBS}.

  The organization of the paper is as follows. In Section~\ref{sect-New-Sob} we recall some basic notions of 
  analysis on metric measure spaces. Section~\ref{sec:pe} is devoted to recalling the construction of prime ends 
  for domains in metric spaces. We define, and illustrate with examples,  
  notions of rectifiably (in)accessible prime ends, see Definition~\ref{SP-R+NR} and (in)finitely 
  far away prime ends, see Definition~\ref{NSP-R+NR}. The prime end capacity $\CapPp$ and the Perron method are 
  discussed in Section~2.3, while in Section~3 we show that the  non-singleton prime ends and (singleton) 
  rectifiably inaccessible prime ends, together, form a prime end capacity null set. This is the content of the first principal result
  of this paper, Theorem~\ref{prop:exceptional}.
These observations allow us to answer an open question posed in~\cite{ES}, see Remark~3.4. In Section~4,
using the results from Section~3, we prove the second principal result of this paper,
Theorem~\ref{thm:main}. There, we show the resolutivity of functions defined on the prime end 
boundary $\bdP \Om$ of the given domain $\Om$, which are Lipschitz continuous with respect to the 
Mazurkiewicz distance when restricted to the part of $\bdP \Om$ consisting of the rectifiably accessible 
prime ends only. This improves resolutivity results presented in~\cite{ES}, see Remark~4.2 below for 
detailed discussion. See also Proposition~\ref{prop:ext} for further extension of Theorem~\ref{thm:main}. 
Section~5 contains five examples illustrating the features of results obtained in Section~4. We prove
the (resolutive) Kellogg property in Section~6 by showing that there is a set $\Irr(\Om)\subset\bdP\Om$
with $\CapPp(\Irr(\Om))=0$ such that the Perron solution of every resolutive continuous boundary data
on $\bdP\Om$ achieves the correct limiting behavior~\eqref{eq:strong-bdry} 
in $\bdP\Om\setminus\Irr(\Om)$, see Theorem~\ref{thm:Kellogg}.
  
\begin{ack}
Part of the research for this paper was conducted during the second author's visit to IM PAN, Poland, in July~2016
and July~2018, and to Link\"oping University in Spring~2018; she thanks these august institutions for their kind hospitality.
\end{ack}

\section{Notation and preliminaries}

In this section we provide descriptions of the basic notions used in the paper. We recommend interested readers
to look to the books~\cite{BBbook, HKST} and the papers~\cite{abbs, ES, bbs1, BjComb, Bj17} 
for more information pertaining to these notions.

\subsection{Newton-Sobolev spaces}\label{sect-New-Sob}
Let $(X, d, \mu)$ be a complete metric measure space equipped with a metric $d$ and a doubling measure $\mu$.
Recall that $\mu$ is doubling if $\mu$ is a Radon measure and
there is some $C\ge 1$ such that whenever $x\in X$ and $r>0$, we have
\[
0<\mu(B(x,2r))\le C\, \mu(B(x,r))<\infty,
\]
where, $B(x,r)=\{y\in X\, :\, d(y,x)<r\}$.
As $\mu$ is doubling and $X$ is complete, necessarily $X$ is proper, i.e. closed and bounded subsets of $X$ are compact. 
A curve in $X$ is a continuous mapping $\ga:[a, b]\to X$. The image of $\ga$ (locus/trajectory) is 
denoted by $|\ga|=\ga([a,b])$. The length of $\ga$ is denoted by $\ell(\ga)$ and we say that $\ga$ is rectifiable if 
$\ell(\ga)<\infty$. Every rectifiable curve admits the so-called arc-length parametrization, see e.g.~\cite[Section~5.1]{HKST}
or~\cite[Section~4.2]{AT}.  See~\cite[Section~5.1]{HKST} for discussions related to integrals 
$\int_\gamma g\, ds$ of Borel functions $g$ on $X$
along rectifiable paths $\gamma$.

 Next, we recall the basic notions in the theory of first order calculus in metric measure spaces.
 We say that a nonnegative Borel  function $g$ on $X$ is an \emph{upper gradient} of a function 
 $u:X\to[-\infty,\infty]$, if for each nonconstant rectifiable curve $\ga: [0,\ell(\ga)]\to X$ we have
 \begin{equation*}
    |u(x)-u(y)|\le \int_\gamma g\, ds
 \end{equation*}
where $x$ and $y$ are the two endpoints of $\ga$. If at least one of $|u(x)|$, $|u(y)|$ is infinite, then we interpret 
the above inequality to mean that  
$\int_\g g\, ds=\infty$. 
Recall that a family of rectifiable curves in $X$ is of zero $p$-modulus if there is a non-negative Borel measurable
function $g\in L^p(X)$ such that $\int_\gamma g\, ds=\infty$ for each $\gamma\in \Gamma$.
We say that $g$ is a \emph{$p$-weak upper gradient} of $u$ if the collection $\Gamma$ of
curves for which the above inequality fails is of $p$-modulus zero.

Upper gradients were introduced by Heinonen and Koskela in~\cite{HeKo98}. It is easy to notice that if $g$ is an upper 
gradient, then so is $g+h$ for any nonnegative Borel function $h$ on $X$. The more handy unique gradient to work 
with is the so called \emph{minimal $p$-weak upper gradient} $g_u \in L^p(X)$, which is the $p$-weak upper gradient
with smallest $L^p$-norm, see e.g~the discussion in~\cite{HKST}. 

The following version of Sobolev spaces on the metric space $X$ will be considered in this paper;
see~\cite{Sh-rev,BBbook,HKST} for more on this space. For $u:X \to [-\infty,\infty]$ a measurable function, set
\[
    \|u\|_{\Np(X)} := \left( \int_X |u|^p \, d\mu + \inf_g  \int_X g^p \, d\mu \right)^{\frac1p},
\]
where the infimum is taken over all upper gradients $g$ of $u$ (or equivalently, over all $p$-weak upper gradients $g$ of $u$). 
With this notation we define the \emph{Newtonian space} 
on $X$ as follows:
\[
        \Np (X) = \{u: \|u\|_{\Np(X)} <\infty \}/\sim,
\]
where functions $u$ and $v$ are equivalent, denoted $u\sim v$, if $\|u-v\|_{\Np(X)}=0$. 

The \emph{Sobolev $p$-capacity} of a set $E\subset X$ is defined as follows:
\begin{equation}
  C_p (E) :=\inf \|u\|_{N^{1, p}(X)}^p, \label{Sob-cap}
\end{equation}
where the infimum is taken over all functions $u\in N^{1, p}(X)$ that have a representative, also denoted $u$,
such that $u \geq 1$ on $E$ (see 
e.g.~\cite[Chapter 1.4]{BBbook} and~\cite[Chapter 7.2]{HKST} for definitions and properties of the Sobolev capacity). 
This capacity measures the exceptionality of sets in the potential theory related to Newtonian spaces and is a finer 
way to detect smallness of sets than null $\mu$-measure.
\vspace{0.3cm}

\begin{deff}\label{def:p-PI}
We say that $X$ supports a \emph{$p$-Poincar\'e inequality} if there exist constants $C>0$ and $\lambda \ge 1$
such that for all balls $B \subset X$ and all $u\in \Np (X)$,
\begin{equation*} 
        \vint_{B} |u-u_B| \,d\mu  \le C\, \diam(B) \left( \vint_{\lambda B} g_u^{p} \,d\mu \right)^{\frac1p},
\end{equation*}
where $u_B$ stands for the mean-value of $u$ on $B$:
\[
u_B:=\vint_B u \, d\mu :=\frac{1}{\mu(B)}\int_B u \,d\mu.
\]
\end{deff}

\begin{deff}\label{def:zero-boundary}
Given a domain $\Om$ in $X$ we denote by $\Npz(\Om)$ the space of Newtonian functions with zero boundary data; these are
functions $f\in \Np(X)$ such that $f(x)=0$ for $p$-capacity almost every point  $x\in X\setminus\Om$.
\end{deff}

 In addition to the assumptions outlined at the beginning of this section, we 
 will also assume in this paper that $X$ supports the $p$-Poincar\'e inequality for a fixed $1\leq p<\infty$ (see 
 below). This together with doubling measure $\mu$ implies that $X$ is \emph{quasiconvex}, meaning that there is a 
 constant $C_q\geq 1$ such that for any points $x,y\in X$ there is a rectifiable curve $\ga$ joining $x$ and $y$ in $X$ 
 satisfying $\ell(\ga)\leq C_q d(x,y)$.

\subsection{\bf Prime ends in metric spaces}\label{sec:pe}

\noindent We now turn our attention to the main object of our work, namely prime ends and the prime end boundary.

The first theory of prime ends is due to Carath\'eodory, who formulated a definition of prime ends from the point of view of 
conformal mappings in simply-connected 
planar domains. Subsequently, the theory has extended to more general domains in the plane and 
in higher dimensional Euclidean spaces, see for instance the works of Freudenthal, Kaufman, Mazurkiewicz, and 
more recently Epstein and N\"akki (for further discussion of the history of prime ends and the literature, we refer 
to~\cite[Sections~1,3]{abbs}, see also~\cite{aw} for an application of the prime end theory in the setting of Heisenberg groups). Here we study prime ends in the more general setting of metric spaces, 
see~\cite{abbs, Est, ES}. The notion of prime ends considered here is from~\cite{ES}, and is a slight modification
from that of~\cite{abbs}.  First, we recall the notion of the Mazurkiewicz distance.

\begin{deff} \label{Mazur}
Given a domain $\Om\subset X$, the \emph{Mazurkiewicz metric} 
$d_M$ on $\Om$ is given by
\[
 d_M(x,y)=\inf_E \diam(E)
\]
for $x,y\in\Om$, where the infimum is over all connected compact subsets $E\subset\Om$ containing $x$ and $y$.
\end{deff}

We will assume throughout this paper that the measure on $X$ is doubling and supports a $p$-Poincar\'e inequality. It 
follows that $X$ is quasiconvex, and so in $\Om$ we know that both $d_M$ and $d$ are locally biLipschitz equivalent.
Furthermore, as $\Om$ is connected, $d_M$ is indeed a metric on $\Om$.

Given two sets $A,K\subset \Om$, we set
\begin{align*}
\text{dist}(A,K)&:=\inf\{d(x,y)\, :\, x\in A,\, y\in K\},\\
\distMa(A,K)&:=\inf\{d_M(x,y)\, :\, x\in A,\, y\in K\}.
\end{align*}

Let $\Om\subsetneq X$ be a bounded domain in $X$, i.e. a bounded nonempty connected open subset 
of $X$ that is not the whole space $X$ itself.
A connected set $E\subsetneq\Om$ is called an \emph{acceptable} set if 
$\overline{E}\cap \partial \Omega\not=\emptyset$. The boundedness and connectedness of an 
acceptable set $E$ implies that $\overline{E}$ is compact and connected, that is, 
$\overline{E}$ is a continuum.

\begin{deff}\label{def-chain}
A sequence $\{E_k\}_{k=1}^\infty$ of acceptable sets is called a \emph{chain} if the following conditions are satisfied
for each $k\in\N$:
\begin{enumerate}
\item \label{it-subset}
$E_{k+1}\subset E_k$,
\item \label{pos-dist}
$\distMa(\Omega\cap\bdy E_{k+1},\Omega\cap \bdy E_k )>0$,
\item \label{impr}
The \emph{impression} $\bigcap_{k=1}^\infty \overline{E}_k \subset \bdy\Om$.
\end{enumerate}
\end{deff}

Note that the impression is either a point or a continuum, since $\{\overline{E}_k\}_{k=1}^\infty$ is a 
decreasing sequence of continua. 

The above definition of a chain differs from the definition of prime ends in~\cite[Definition~4.2]{abbs} only in that
we require $\distMa(\Omega\cap\bdy E_{k+1},\Omega\cap \bdy E_k )>0$ in
condition~\ref{pos-dist}  rather than that
$\dist(\Omega\cap\bdy E_{k+1},\Omega\cap \bdy E_k )>0$.
However, we emphasize that such a modification does not affect results 
from~\cite{abbs}, see~\cite[Definition~2.3]{ES} and the discussion following it for comparison between the above 
definition and~\cite[Definition~4.2]{abbs}. In general, there are more chains and ends in the sense of the above 
definition than in the sense of~\cite{abbs}, and therefore a priori a prime end in the setting of~\cite{abbs} need not be 
prime in our sense. We further note that results in~\cite{abbs} employed below, which use the analog of 
condition~\ref{pos-dist} in Definition~\ref{def-chain} for $d$ instead of $\distMa$ are, in fact, 
based on the positivity of the Mazurkiewicz distance; hence the results of~\cite{abbs} do apply here. 

\begin{deff}
We say that a chain $\{E_k\}_{k=1}^\infty$ \emph{divides} the chain $\{F_k\}_{k=1}^\infty$ if for each $k\in\N$ there 
exists $l_k\in\N$ such that $E_{l_k}\subset F_k$. We say that two chains 
are \emph{equivalent} if they divide each other. The collection of all chains that are equivalent to a given chain
$\{E_k\}_{k=1}^\infty$ is called 
an \emph{end} and is denoted $[E_k]$. The 
\emph{impression of} an end $[E_k]$, denoted $I[E_k]$, is defined as the impression of any representative chain.
\end{deff}

The impression of an end is independent of the choice of representative chain, see \cite[Section 4]{abbs}. Note 
also that if a chain $\{F_k\}_{k=1}^\infty$ divides $\{E_k\}_{k=1}^\infty$, then it divides every chain equivalent to 
$\{E_k\}_{k=1}^\infty$. Moreover, if $\{F_k\}_{k=1}^\infty$ divides $\{E_k\}_{k=1}^\infty$, then every chain 
equivalent to $\{F_k\}_{k=1}^\infty$ also divides $\{E_k\}_{k=1}^\infty$. Therefore, the relation of division extends 
in a natural way from chains to ends, defining a partial order on ends.

\begin{deff}\label{prime-end}
We say that an end $[E_k]$ is a \emph{prime end} if it is not divisible by any other end. The collection of 
all prime ends is called the \emph{prime end boundary} and is denoted $\bdP\Omega$.
The collection of all prime ends with singleton impressions is denoted $\bdySP \Omega$.
\end{deff}

\begin{remk}\label{rem:SP}
It is not difficult to see that an end with a singleton impression is necessarily a prime end. Moreover,
if $\gamma:[a,b]\to\overline{\Om}$ is a curve such that $\gamma([a,b))\subset\Om$, $\gamma(b)\in\partial\Om$,
and for each $m\in\N$ there exists $t_m\in(a,b)$ such that $\gamma((t_m,b))\subset E_m$, then
$[E_m]$ is a singleton prime end with  impression $\{\gamma(b)\}$, see~\cite{abbs}.
It was shown in~\cite[Theorem~9.6]{abbs} that $d_M$ extends as a metric to
$\bdySP\Om$ and that $\Om\cup\bdySP\Om$ is complete under this metric, but not
necessarily compact. Furthermore, $\Om\cup\bdySP\Om$ is compact with respect to $d_M$ if and only
if $\bdP\Om=\bdySP\Om$, or equivalently, $\Om$ is finitely connected at the boundary.
\end{remk}

In order to set up a viable topology on the prime ends closure $\overline{\Om}^P:=\Om\cup\bdP \Om$ let us recall
the following notion, cf.~\cite[Section~8]{abbs}. We say that a \emph{sequence of points} 
$\{x_n\}_{n=1}^\infty$ in $\Omega$ \emph{converges to the prime end} $[E_k]$, 
and write $x_n \to [E_k]$ as $n \to \infty$, if for all $k\in\N$ there exists $n_k\in\N$ such that 
$x_n\in E_k$ whenever $n \ge n_k$.
Next, we define the sequential topology on $\clOmP$.

\begin{deff}\label{PrimeEndTop}
Given a sequence $\{p_k\}_k\in\clOmP$ and $[E_k]\in\bdP\Om$, we say that 
$\lim_kp_k=[E_k]$ if the subsequence of $\{p_k\}_k$ that is from $\Om$ converges to $[E_k]$ in the 
above sense, and for the subsequence (if any) of prime ends, denoted without loss of generality by
$\{[F_{j,k}]\}_j$, we have that for each $k\in\N$ there is some $j_k\in\N$ such that whenever 
$j\in\N$ with $j\ge j_k$, the prime end $[F_{j,k}]$ satisfies the condition that for
any/each respresentative chain $\{F_{j,m}\}_m$  we have $F_{j,m}\subset E_k$ for large enough $m$.
It is possible for $\{p_k\}_k$ to converge to two distinct prime ends, see the discussion in~\cite{abbs}.
This notion of convergence induces a topology on $\clOmP$, called the prime end topology of $\clOmP$. 
\end{deff}

Such a 
topology satisfies the separation condition ($T1$), but need not be, in general, Hausdorff ($T2$), 
see~\cite[Proposition 8.8, Example 8.9]{abbs}.
A basis for the topology on $\clOmP$ is defined as follows. Given $G\subset \Om$, we let
\[
 G^P:=G\cup \{[E_k]\in \bdP \Om\,:\, \text{ there exists }n\in\N\text{ with }E_n\subset G\}.
\]
Then, \cite[Proposition 8.5]{abbs} shows that the following collection 
of sets forms a basis for the topology on $\clOmP$:
\[
 \{G, G^P\,:\, G\subset \Om \hbox{ is open} \}.
\]

In addition to our afore-mentioned assumptions on space $X$, in what follows we will also require the domain 
$\Om$ to satisfy the following assumption (cf. Assumption~4.7 in~\cite{ES} and the discussion therein):

\begin{assumption}\label{assumption-ends}
 For every collection $\mathcal{E}$ of ends in $\Om$ that is totally ordered by division (i.e.: for $x, y\in \mathcal{E}$ 
 we define that $x\leq y$ if and only if $x$ divides $y$),  there \emph{must} exist an end $[F_k]$ such that $[F_k]\leq [E_n]$ 
 for every $[E_n]\in \mathcal{E}$. 
\end{assumption}

In other words, we assume that the collection of all ends in $\Om$ satisfies the hypotheses of the Kuratowski--Zorn 
lemma. The above assumption is satisfied, for instance, if $\Om$ is a simply-connected bounded domain in $\R^2$ 
or if $\bdySP \Om$ is compact (and hence $\bdP \Om=\bdySP \Om$), see the details of discussion on 
pg.~346 in~\cite{ES}. We do not know of an example where this assumption fails.
We employ this assumption when using 
Theorems~\ref{thm77} and~\ref{thm78} (Theorems 7.7 and 7.8 in~\cite{ES}) in Section~\ref{sect:res}. 
The proofs of these results rely on the comparison 
principle (\cite[Proposition~7.3]{ES}) and~\cite[Corollary~5.4]{ES}, which in turn depend on 
Assumption~\ref{assumption-ends}.

We now classify prime ends according to whether they allow approach to the impression, from inside the chain
that makes up the end, along a rectifiable curve.

\begin{deff} \label{SP-R+NR}
A prime end $[E_k]\in \bdySP\Om$ is said to be  \emph{rectifiably accessible} if there is a rectifiable curve
$\gamma:[0,1]\to X$ such that $\gamma([0,1))\subset\Omega$, $\{\gamma(1)\}= I[E_k]$, and for each $k\in\N$ there is
some $t_k\in(0,1)$ such that $\gamma((t_k,1))\subset E_k$. We say that $[E_k]\in\bdySP\Om$  is
\emph{rectifiably inaccessible} if it is not rectifiably accessible.
\end{deff}

\begin{example}(see  Figure 1)\label{ex1}
The following domain provides us with an example of a rectifiably inaccessible singleton prime end. Consider the
graph of the function $f:[0,1]\to\R$ given by a ``harmonically damped sawtooth", that is, 
\[
f(x)=\begin{cases} 
	2(n+1)\left[x-\tfrac{2n+1}{2n(n+1)}\right]&\text{ if }x\in[\tfrac{2n+1}{2n(n+1)},\tfrac{1}{n}],\\
      -2n\left[x-\tfrac{2n+1}{2n(n+1)}\right]&\text{ if }  x\in[\tfrac{1}{n+1}, \tfrac{2n+1}{2n(n+1)}] \end{cases}
\]
for each positive integer $n$. Let $\Om\subset\R^2$ be obtained by 
\[
\Om:=\bigcup_{n=1}^{\infty}\,\bigcup_{x\in \left(\frac{1}{n+1}, \frac1n\right]} \{x\}\times(f(x)-n^{-4}, f(x)+n^{-4}).
\]
For this domain, the point $(0,0)$ is the impression of exactly one prime end from $\bdySP\Om$. 
Indeed, a chain is given by a sequence of 
balls centered at point $(0,0)$ with shrinking radii, intersected with $\Om$. Since the resulting end has the
singleton impression $\{(0,0)\}$, it is a prime end, by Proposition 7.1 in~\cite{abbs}. Moreover, this prime 
end is rectifiably inaccessible, as each curve in $\Om$ with the end point $(0,0)$ should have length at 
least $\sum_{n=k}^\infty \frac{1}{n}=\infty$, see  Figure~1.
\definecolor{zzttqq}{rgb}{0.6,0.2,0.}
\definecolor{uuuuuu}{rgb}{0.26666666666666666,0.26666666666666666,0.26666666666666666}
\begin{figure}[t]
\centering
\subfloat[\empty]
{
\begin{tikzpicture}[line cap=round,line join=round,>=triangle 45, x=2.3cm,y=1.5cm]
\draw[->,color=black] (-0.6557514812466693,0.) -- (3.2506136827385212,0.);
\foreach \x in {-0.6,-0.4,-0.2,0.2,0.4,0.6,0.8,1.,1.2,1.4,1.6,1.8,2.,2.2,2.4,2.6,2.8,3.,3.2}
\draw[shift={(\x,0)},color=black] (0pt,-2pt);
\draw[color=black] (3.1831170924320817,0.03983783753066676) node [anchor=south west] { x};
\draw[->,color=black] (0.23358515970077589,-2.3652605615412288) -- (0.23358515970077589,4.158185334105454);
\foreach \y in {-2.,-1.5,-1.,-0.5,0.5,1.,1.5,2.,2.5,3.,3.5,4.}
\draw[shift={(0,\y)},color=black] (-2pt,0pt);
\draw[color=black] (0.261092684470762367,3.9390772276867865) node [anchor=west] { y};
\clip(-0.6557514812466693,-2.3652605615412288) rectangle (3.2506136827385212,4.158185334105454);
\fill[line width=2.pt,color=zzttqq,fill=zzttqq,fill opacity=0.3499999940395355] (2.406001401405573,3.594639666974907) -- (2.399121644696401,0.) -- (1.8008091879679622,-1.8418327588205663) -- (1.8008091879679622,1.7995780169481) -- cycle;
\fill[line width=2.pt,color=zzttqq,fill=zzttqq,fill opacity=0.3499999940395355] (1.8008091879679622,1.7995780169481002) -- (1.2061407021510402,2.99943871620263) -- (1.2163981972940507,-0.9291819235703255) -- (1.8008091879679622,-1.8418327588205663) -- cycle;
\fill[line width=0.4pt,dash pattern=on 1pt off 1pt,color=zzttqq,fill=zzttqq,fill opacity=0.3499999940395355] (1.2092725156680428,1.7999541391906626) -- (0.9870387766776816,0.5957637500421277) -- (0.7889716060507951,1.4155417618034083) -- (0.8,0.2) -- (1.0090709023283804,-0.6032724142090399) -- (1.2122317611091642,0.6665631352412253) -- cycle;
\fill[line width=2.pt,color=zzttqq,fill=zzttqq,fill opacity=0.3499999940395355] (0.48073306015792455,0.5460354183937188) -- (0.43026046184970224,0.04707773226101074) -- (0.39565068015263555,0.3297242827870535) -- (0.39420860591525775,0.16965404243812113) -- (0.4345866845618356,-0.04665709316854425) -- (0.4850592828700579,0.3816389553326532) -- cycle;
\fill[line width=2.pt,color=zzttqq,fill=zzttqq,fill opacity=0.3499999940395355] (0.39456756512938806,0.25319119833634446) -- (0.37102744645506447,0.016166555132811636) -- (0.3523576971616354,0.14604307195666527) -- (0.3539811536219336,0.0892220958462293) -- (0.3726509029153627,-0.018737758763599018) -- (0.39456756512938806,0.19718195045605763) -- cycle;
\fill[line width=2.pt,color=zzttqq,fill=zzttqq,fill opacity=0.3499999940395355] (0.7915894847786129,1.1495899428435203) -- (0.6880124751163107,0.28921811912096) -- (0.5917296210640297,0.9537590426678629) -- (0.6077767634060766,0.14796304358442025) -- (0.701141955214349,-0.3592710912967907) -- (0.79742480926663,0.5203627881807523) -- cycle;
\fill[line width=2.pt,color=zzttqq,fill=zzttqq,fill opacity=0.3499999940395355] (0.5969207611789771,0.7770111367751413) -- (0.5544894992104802,0.17793182593718482) -- (0.47817038991445987,0.6318503795095741) -- (0.48642110443294856,0.3322641341517972) -- (0.5668655709882132,-0.17158546031355498) -- (0.6025082626225347,0.402334740523864) -- cycle;
\draw [line width=1.2pt] (1.8165845488808996,0.)-- (1.20015982870885,1.193112453325547);
\draw [line width=1.2pt] (1.20015982870885,1.193112453325547)-- (1.,0.);
\draw [line width=1.2pt] (1.,0.)-- (0.7892419961830267,0.8056107579699517);
\draw [line width=1.2pt] (0.7892419961830267,0.8056107579699517)-- (0.6876366887733469,0.);
\draw [line width=1.2pt] (0.6876366887733469,0.)-- (0.5973208599647427,0.6136896217516676);
\draw [line width=1.2pt] (0.5973208599647427,0.6136896217516674)-- (0.5609622334714767,0.);
\draw [line width=1.2pt] (0.5609622334714767,0.)-- (0.4800841062183786,0.4819102852073083);
\draw (2.3773765456489593,-0.04470652537988971) node[anchor=north west] {1};
\draw (1.1244710880856745,1.6085637321427808) node[anchor=north west] {$\frac{1}{2}$};
\draw (1.1624379201330468,0.07480698721211057) node[anchor=north west] {$\frac{1}{2}$};
\draw (0.7658954520827143,0.08476644659477726) node[anchor=north west] {$\frac{1}{3}$};
\draw (0.7490213045061044,1.3296988694281136) node[anchor=north west] {$\frac{1}{3}$};
\draw (0.5296573860101758,0.09472590597744396) node[anchor=north west] {$\frac{1}{4}$};
\draw (0.550750070480938,1.1504286005401132) node[anchor=north west] {$\frac{1}{4}$};
\draw (0.41153835297390645,1.0010367098001127) node[anchor=north west] {$\frac{1}{5}$};
\draw (0.10358515970077589,-0.16422003797189) node[anchor=north west] {(0,0)};
\draw [line width=1.2pt] (2.405976179779649,1.7991975907570856)-- (1.8165845488808994,0.);
\draw (2.3984692301197215,1.9671042699187817) node[anchor=north west] {1};
\draw (1.8163111387266804,1.4715637378254458) node[anchor=north west] {f(x)};
\begin{scriptsize}
\draw [fill=black] (2.399121644696401,0.) circle (0.5pt);
\draw [fill=black] (1.20015982870885,1.193112453325547) circle (0.5pt);
\draw [fill=black] (1.1986385919234932,0.) circle (0.5pt);
\draw [fill=black] (0.799354761924179,0.) circle (0.5pt);
\draw [fill=black] (0.7892419961830266,0.8056107579699516) circle (0.5pt);
\draw [fill=black] (0.5936630919245324,0.) circle (0.5pt);
\draw [fill=black] (0.5973208599647427,0.6136896217516674) circle (0.5pt);
\draw [fill=black] (0.4800841062183786,0.4819102852073083) circle (0.5pt);
\draw [fill=black] (2.405976179779649,1.7991975907570856) circle (0.5pt);
\draw [fill=uuuuuu] (1.8165845488808994,0.) circle (0.5pt);
\draw [fill=black] (0.24701541410195998,-0.0048679751038052345) circle (1.5pt);
\end{scriptsize}

\end{tikzpicture}
}
\caption{Example~\ref{ex1}.}
\end{figure}
\end{example}

\begin{deff} \label{NSP-R+NR}
Let $x_0\in\Om$.
A prime end $[E_k]\in\bdP\Om\setminus\bdySP\Om$ is said to be \emph{infinitely far away} if
there is some $x_0\in\Om$ such that whenever
$\{x_j\}_j$ is a sequence in $\Om$ with $x_j\to[E_k]$ and $\gamma_j$ is a rectifiable curve in $\Om$ connecting
a point $x_0\in\Om$ to $x_j$, $j\in\N$, we must have $\lim_{j\to\infty}\ell(\gamma_j)=\infty$. A prime end
$[E_k]\in\bdP\Om\setminus\bdySP\Om$ is said to be \emph{finitely away} if it is not infinitely far away.
\end{deff}

\begin{remk}
Recall from the beginning of Section~\ref{sect-New-Sob} that in this paper the measure $\mu$ on $X$ is doubling 
and supports a $p$-Poincar\'e inequality, and hence $X$ is quasiconvex. Given this, we know that a domain (open connected subset)
in $X$ is necessarily rectifiably connected--for each pair of points $x,y\in\Om$  there is a rectifiable curve in $\Omega$ with
$x$ and $y$ as end points. Thus the above classification of finitely away and infinitely far away prime ends 
does not depend on the choice of $x_0$.
\end{remk}

\begin{remk}\label{rem:Curve-within-End}
If a sequence $\{x_j\}_j$ of points in $\Om$
converges to the prime end $[E_k]$, then for each $k\in\N$ there exists $j_k$ such that
whenever $j\ge j_k$ we must have $x_j\in E_k$; it then follows that any curve $\gamma$ connecting $x_0$ to
$x_j$ for such $j$ must have its tail end contained in $E_k$.
\end{remk}

\begin{example}(Double equilateral comb, see Figure 2 (left).)\label{ex2}
This example also appears as~\cite[Example~5.4]{abbs}. Let $\Om\subset \R^2$ be the domain 
obtained from the unit square $(0,1)\times (0,1)$ by removing the collection of segments
$\left(0, \tfrac{3}{4}\right]\times \{\frac{1}{2^{n}}\}$
and $\left[\tfrac14, 1\right)\times \{\frac{3}{2^{n+2}}\}$
for $n=1,2, \ldots$.  Define the acceptable sets
\[
 E_{k}=\Om\cap \left(\left(\frac14-\frac{1}{2^{k+2}}, \frac34+\frac{1}{2^{k+2}}\right)
   \times \left(0, \frac{1}{2^{k}}\right)\right)
\quad \text{for } k=1,2, \ldots.
\]
Then $[E_k]$ is a prime end with impression $I[E_k]=\left[\frac14,\frac34\right]\times\{0\}$. Let $x_0\in\Om$ and without 
loss of generality assume that $x_0\in \Om\setminus E_1$ as in Figure 2 (left). If $\{x_j\}_j$ is a sequence converging 
to the prime end $[E_k]$, then a curve joining $x_0$ with any $x_j\in E_k$ has length at least 
$2k\left(\frac34-\frac14\right)\to\infty$  for $k\to \infty$, showing that $[E_k]$ is infinitely far away.
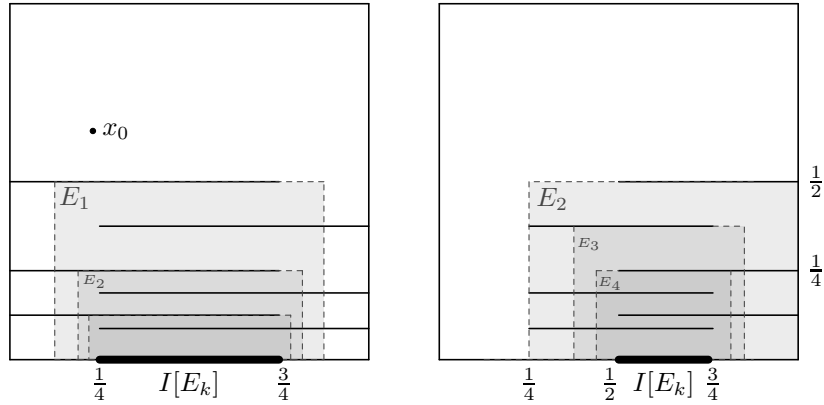
\begin{figure}[t]
\centering
\subfloat[\empty]
{
\definecolor{zzttqq}{rgb}{0.27,0.27,0.27}
\begin{tikzpicture}[line cap=round,line join=round,>=triangle 45,x=0.59cm,y=0.59cm]
\clip(-2.21,-4.32) rectangle (6.77,5.44);
\fill[color=zzttqq,fill=zzttqq,fill opacity=0.1] (-1,1) -- (-1,-3) -- (5,-3) -- (5,1) -- cycle;
\fill[line width=0.4pt,color=zzttqq,fill=zzttqq,fill opacity=0.1] (-0.48,-1) -- (-0.48,-3) -- (4.52,-3) -- (4.52,-1) -- cycle;
\fill[color=zzttqq,fill=zzttqq,fill opacity=0.1] (-0.24,-2) -- (-0.24,-3) -- (4.26,-3) -- (4.26,-2) -- cycle;
\draw [line width=0.6pt] (-2,5)-- (-2,-3);
\draw [line width=0.6pt] (-2,-3)-- (6,-3);
\draw [line width=0.6pt] (6,-3)-- (6,5);
\draw [line width=0.6pt] (6,5)-- (-2,5);
\draw [line width=0.6pt] (-2,1)-- (4,1);
\draw [line width=0.6pt] (-2,-1)-- (4,-1);
\draw [line width=0.6pt] (-2,-2)-- (4,-2);
\draw [line width=0.6pt] (0,0)-- (6,0);
\draw [line width=0.6pt] (0,-1.5)-- (6,-1.5);
\draw [line width=0.6pt] (0,-2.30)-- (6,-2.30);
\draw [dash pattern=on 2pt off 2pt,color=zzttqq] (-1,1)-- (-1,-3);
\draw [dash pattern=on 2pt off 2pt,color=zzttqq] (-1,-3)-- (5,-3);
\draw [dash pattern=on 2pt off 2pt,color=zzttqq] (5,-3)-- (5,1);
\draw [dash pattern=on 2pt off 2pt,color=zzttqq] (5,1)-- (-1,1);
\draw [line width=0.4pt,dash pattern=on 2pt off 2pt,color=zzttqq] (-0.48,-1)-- (-0.48,-3);
\draw [line width=0.4pt,dash pattern=on 2pt off 2pt,color=zzttqq] (-0.48,-3)-- (4.52,-3);
\draw [line width=0.4pt,dash pattern=on 2pt off 2pt,color=zzttqq] (4.52,-3)-- (4.52,-1);
\draw [line width=0.4pt,dash pattern=on 2pt off 2pt,color=zzttqq] (4.52,-1)-- (-0.48,-1);
\draw [dash pattern=on 2pt off 2pt,color=zzttqq] (-0.24,-2)-- (-0.24,-3);
\draw [dash pattern=on 2pt off 2pt,color=zzttqq] (-0.24,-3)-- (4.26,-3);
\draw [dash pattern=on 2pt off 2pt,color=zzttqq] (4.26,-3)-- (4.26,-2);
\draw [dash pattern=on 2pt off 2pt,color=zzttqq] (4.26,-2)-- (-0.24,-2);
\draw [fill=black] (-0.15, 2.14) circle (1pt);
\draw (-0.15, 2.14) node[anchor=west] {$x_0$};
\draw[color=zzttqq] (-0.55, 0.64) node {$E_1$};
\draw[color=zzttqq] (-0.12,-1.25) node {\tiny{$E_2$}};
\draw[color=black] (0,-3.55) node {$\frac14$};
\draw[color=black] (4.1,-3.55) node {$\frac34$};
\draw [line width=3pt,color=black] (0,-3)-- (4,-3);
\draw[color=black] (2,-3.55) node {$I[E_k]$};
\end{tikzpicture}
}
\subfloat[\empty]
{
\definecolor{zzttqq}{rgb}{0.27,0.27,0.27}
\begin{tikzpicture}[line cap=round,line join=round,>=triangle 45,x=0.59cm,y=0.59cm]
\clip(-2.21,-4.32) rectangle (6.77,5.44);
\fill[color=zzttqq,fill=zzttqq,fill opacity=0.1]  (0,1) -- (0,-3) -- (6,-3) -- (6,1) -- cycle;
\fill[line width=0.4pt,color=zzttqq,fill=zzttqq,fill opacity=0.1] (1,0) -- (1,-3) -- (4.8,-3) -- (4.8,0) -- cycle;
\fill[color=zzttqq,fill=zzttqq,fill opacity=0.1] (1.5,-1) -- (1.5,-3) -- (4.5,-3) -- (4.5,-1) -- cycle;
\draw [line width=0.6pt] (-2,5)-- (-2,-3);
\draw [line width=0.6pt] (-2,-3)-- (6,-3);
\draw [line width=0.6pt] (6,-3)-- (6,5);
\draw [line width=0.6pt] (6,5)-- (-2,5);

\draw [line width=0.6pt] (2,1)-- (6,1);
\draw [line width=0.6pt] (2,-1)-- (6,-1);
\draw [line width=0.6pt] (2,-2)-- (6,-2);
\draw [line width=0.6pt] (0,0)-- (4.1,0);
\draw [line width=0.6pt] (0,-1.5)-- (4.1,-1.5);
\draw [line width=0.6pt] (0,-2.30)-- (4.1,-2.30);

\draw [dash pattern=on 2pt off 2pt,color=zzttqq] (0,1)-- (0,-3);
\draw [dash pattern=on 2pt off 2pt,color=zzttqq] (-1,-3)-- (5,-3);
\draw [dash pattern=on 2pt off 2pt,color=zzttqq] (5,1)-- (0,1);
\draw [line width=0.4pt,dash pattern=on 2pt off 2pt,color=zzttqq] (1,0)-- (1,-3);
\draw [line width=0.4pt,dash pattern=on 2pt off 2pt,color=zzttqq] (1.5,-1)-- (2,-1);
\draw [line width=0.4pt,dash pattern=on 2pt off 2pt,color=zzttqq] (4.8,-3)-- (4.8,0);

\draw[color=black] (6.4,1) node {$\frac12$};
\draw[color=black] (6.4,-1.) node {$\frac14$};

\draw [dash pattern=on 2pt off 2pt,color=zzttqq] (1.5,-1)-- (1.5,-3);
\draw [dash pattern=on 2pt off 2pt,color=zzttqq] (4.1,0)-- (4.8,0);
\draw [dash pattern=on 2pt off 2pt,color=zzttqq] (4.5,-3)-- (4.5,-1);
\draw[color=zzttqq] (0.52, 0.62) node {$E_2$};
\draw[color=zzttqq] (1.33,-0.4) node {\tiny{$E_3$}};
\draw[color=zzttqq] (1.8,-1.28) node {\tiny{$E_4$}};
\draw[color=black] (0,-3.55) node {$\frac14$};
\draw[color=black] (1.8,-3.55) node {$\frac12$};
\draw[color=black] (4.1,-3.55) node {$\frac34$};
\draw [line width=3pt,color=black] (2,-3)-- (4,-3);
\draw[color=black] (3,-3.55) node {$I[E_k]$};
\end{tikzpicture}
}
\caption{Example~\ref{ex2} (left), Example~\ref{ex3} (right).}
\end{figure}
\end{example}

\begin{example}(see Figure 2 (right).)\label{ex3}
An example of a finitely away prime end comes from the following variant of the previous example. Let
\[
\Om=(0,1)^2\setminus \bigcup_{n\in\N}\left[\tfrac12,1\right]\times\left\{\tfrac{1}{2n}\right\}
  \setminus\bigcup_{n\in\N} \left[\tfrac14, \tfrac34\right]\times\left\{\tfrac{1}{2n+1}\right\}.
\]
This domain has only one non-singleton prime end, namely the one corresponding to the chain given by
\[
E_k=\Om\cap \left[\tfrac12-\tfrac{1}{2k},\tfrac34+\tfrac{1}{2k}\right]\times\left(0,\tfrac1k\right), \quad k\ge 2.
\]
One verifies directly from Definition~\ref{def-chain} that $\{E_k\}_{k=2}^{\infty}$ is a chain and defines an end in $\Om$.
That this end is minimal (i.e. the prime end) can be seen, by the following reasoning. Every other $E_k$, for $k\geq 3$, has 
one vertical side of its boundary which touches a slit from one family of slits, say 
$\left[\tfrac12,1\right]\times\left\{\tfrac{1}{2n}\right\}$ for $n=1,2,\dots$ and another vertical side whose distance to the 
appropriate slit from the other family ($\left[\tfrac14, \tfrac34\right]\times\left\{\tfrac{1}{2n+1}\right\}$ for $n=1,2,\dots$) 
approaches $0$ when $k\to \infty$, see Figure~2(right). Therefore, gaps between the boundaries of $E_k$ and 
corresponding teeth of the comb close for increasing $k$. Hence, if $[F_l]$ is an end dividing $[E_k]$, then 
the connectedness of all acceptable sets $F_l$ implies that every $F_l\supset E_K$ for $K$ large enough, and 
thus also $F_l\supset E_k$ for $k\geq K$ (Definition~\ref{def-chain}(a)). Since this gives the equivalence of 
$[F_l]$ and $[E_k]$, the end $[E_k]$ is, in fact, the prime end with impression $I[E_k]=[\tfrac12, \tfrac 34]$.

The prime end $[E_k]$ is finitely away. To see this, note that every point $p\in E_k$, $k\in\N$, can be connected to
the point, say, $x_0=(\tfrac18,\tfrac12)\in\Om$ by the concatenation of a horizontal line segment from 
$p=(x,y)$ to $q=(\tfrac18,y)$ with a vertical line segment from $q$ to $x_0$. Such curves have length at most $\tfrac54$ for $k>3$. 
\end{example}

 \subsection{Perron solution with respect to the prime end boundary}

 In this section we recall basic notions for the Perron method in metric measure spaces. See~\cite[Section~7]{ES} 
 for further details.

 Let $\Om\subset X$ be a domain in $X$. We say that a function $u\in \Np(\Om)$ is $p$-harmonic if it is a 
 continuous minimizer of the $p$-Dirichlet energy, i.e. for all $\phi \in \Npz(\Om)$ we have
 \[
  \int_{\phi\not=0}g_u^p\,d\mu \leq  \int_{\phi\not=0}g_{u+\phi}^p\,d\mu.
 \]
 Here, $g_u$ and $g_{u+\phi}$ stand for the minimal $p$-weak upper gradients of $u$ and $u+\phi$, respectively. 
 Recall the notion of Sobolev $p$-capacity from~\eqref{Sob-cap}.
 By modifying $u$ on a set of Sobolev $p$-capacity zero if necessary, we see that $u$ is locally H\"older 
 continuous in $\Om$, see e.g.~the discussion in~\cite[Section 5]{KiSh01} and~\cite{BBbook}. 
 
 Recall that we require $\Om$ to satisfy Assumption~\ref{assumption-ends}. 
 Moreover, we assume that $\Cp(X\setminus \Om)>0$. 
 This latter assumption allows us to avoid trivial solutions to the $p$-Dirichlet problem, see the discussion following 
 in~\cite[Definition~3.5]{ES}. Indeed, if $\Cp(X\setminus \Om)=0$, then $\Np(\Om)=\Npz(\Om)=\Np(X)$ 
 and so for any $f\in \Np(\Om)$ the constant function $u=0$ would act as a $p$-harmonic function in $\Om$ with
 $f-u\in\Npz(\Om)$, that is, the Dirichlet problem for boundary data in $\Np(X)$ has only trivial solutions.

The following notion of capacity is a modification of the related notion of $\bCp$ from~\cite[Section~3]{bbs1},
and was first formulated in~\cite[Definition~6.1]{ES}. Here we use the prime end topology as in Definition~\ref{PrimeEndTop}.

\begin{deff}[cf. Definition 6.1 in~\cite{ES}]\label{def:capP}
 Let $E\subset \clOmP$. We define \emph{prime end capacity} of set $E$ as
 \begin{equation}
  \CapPp(E, \Om)=\inf_{u\in \A_{E}}\,\|u\|^p_{\Np(\Om)},
 \end{equation}
 where $\A_{E}$ consists of all functions $u\in \Np(\Om)$ satisfying the following two conditions:
 \begin{align*}
 &(1)\,\, u\geq 1\hbox{ on } E\cap \Om,\\
 &(2)\,\liminf_{\Omega\ni y\overset{\clOmP} \to x} u(y)\geq 1\hbox{ for all } x\in E\cap \bdP \Om.
 \end{align*}
 If the underlying domain $\Om$ is fixed, we denote, for simplicity, $\CapPp(E):=\CapPp(E, \Om)$.
\end{deff}

 By~\cite[Lemma~6.2]{ES} the prime end capacity defines an outer measure on $\clOmP$. Moreover, if 
 $X$ is doubling and supports a $p$-Poincar\'e inequality, then $\CapPp$ is an outer 
 capacity, see~\cite[Proposition~6.3]{ES}. By an outer capacity we mean that for each $E\subset\clOmP$,
 \[
 \CapPp(E)=\inf_{E\subset U\subset\clOmP}\CapPp(U),
 \]
 where the infimum is over all open (in the prime end topology of $\clOmP$) subsets $U$ of $\clOmP$
 containing $E$.

 \begin{deff}[cf. Definition 6.4 in~\cite{ES}]\label{Cp-quasicont}
 Let $E\subset\overline{\Om}^P$.
  A function $f:A \to \eR$ is said to be \emph{$\CapPp$-quasicontinuous}  if for every $\eps>0$ 
  there exists an open set $U\subset \clOmP$ such that $\CapPp(U)<\eps$ and 
  $f|_{A\setminus U}$ is real-valued continuous.
 \end{deff}
 
 Recall from~\cite{Sh-rev, BBbook} that a function $f\in\Np(X)$ is $C_p$-quasicontinuous.
 
 \begin{deff}\label{def:H-Omega-f}
For $f\in N^{1,p}(\Om)$ we set $H_\Om f$ to be the function in $N^{1,p}(\Om)$ such that $f-H_\Om f\in N^{1,p}_0(\Om)$ and
whenever $\pip\in \Npz(\Om)$ (see Definition~\ref{def:zero-boundary}), we have
\[
\int_{\Om\cap\{\pip\ne 0\}}g_{H_\Om f}^p\, d\mu\le \int_{\Om\cap\{\pip\ne 0\}}g_{\pip+H_\Om f}^p\, d\mu.
\]
\end{deff}

See~\cite[Theorem~3.2]{KiMa02} for the existence and uniqueness of such a function $H_\Om f$ given $f$, 
where $H_\Om f$ is given as the solution of the obstacle problem $K_{-\infty,f}(\Om)$.

 \begin{deff}[cf. Definition 7.1 in \cite{ES}]\label{def-p-super}
  Let $1<p<\infty$. We say that a lower semicontinuous function $u:\Om\to (-\infty, \infty]$ such that 
  $u\not\equiv \infty$ on $\Om$ is \emph{$p$-superharmonic} if it satisfies the following comparison principle:
  For every nonempty open set $V\Subset \Om$ and all Lipschitz functions $v$ on $X$, it holds that 
  $H_Vv\leq u$ in $V$ whenever $v\leq u$ on $\bdy V$.
  
  A function $u$ is called \emph{$p$-subharmonic} if $-u$ is $p$-superharmonic.
 \end{deff}
 
For more information on $p$-super(sub)harmonic functions and $p$-harmonic extensions in the metric 
 setting we refer, for instance, to~\cite{bbs1} and~\cite[Chapters~9,10]{BBbook}.

 Now we are ready to describe the Perron method. 

 \begin{deff}[cf. Definition 7.2 in \cite{ES}]
  Let $f:\bdP \Om \to \eR$.  The collection of all $p$-superharmonic functions $u$ on $\Om$ 
  bounded from below such that
  \[
   \liminf_{\Omega\ni y\overset{\clOmP} \to [E_n]} u(y) \geq f([E_n]),\quad \hbox{ for all } [E_n]\in \bdP \Om
  \]
  is denoted by $\UU_f(\clOmP)$.
  We define the \emph{upper Perron solution} of $f$ by
  \[
   \uHp_{\clOmP} f(x) :=\inf_{u\in \UU_f(\clOmP)} u(x),\quad x\in \Om.
  \]
  Similarly, let $\LL_f(\clOmP)$ be the set of all $p$-subharmonic functions $u$ on $\Om$ bounded above such that
  \[
   \limsup_{\Omega\ni y\overset{\clOmP} \to [E_n]} u(y) \leq f([E_n]),\quad \hbox{ for all } [E_n]\in \bdP \Om.
  \]
  We define the \emph{lower Perron solution} of $f$ by
  \[
   \lHp_{\clOmP} f(x) :=\sup_{u\in \LL_f(\clOmP)} u(x),\quad x\in \Om.
  \]
 \end{deff}

 Note that $\lHp_{\clOmP} f=-\uHp_{\clOmP}(-f)$.

 \begin{deff}
  If $f:\bdP \Om\to\eR$ such that 
  $$
   \uHp_{\clOmP} f=\lHp_{\clOmP} f \quad \hbox{on }\Om,
  $$
  then we say that $f$ is \emph{resolutive}, and set $P_{\clOmP} f:=\uHp_{\clOmP}f$.
 \end{deff}

 One of the results obtained for the Perron method in~\cite{ES} is a comparison principle
 between $p$-super- and $p$-subharmonic functions with respect to the prime end boundary. 
 Among its consequences we have that if $f:\bdP \Om \to \R$, then $\lHp_{\clOmP}(f)\leq \uHp_{\clOmP}(f)$
 (recall that we assume $\Om$ to be bounded).

For the reader's convenience we now recall the two resolutivity results of \cite{ES} needed in our work, see 
  Section~\ref{sect:res} below for their application.

\begin{thm}[Theorem 7.7 in \cite{ES}]\label{thm77} 
  Let $F: \clOmP \to \R$ be a $\CapPp$-quasicontinuous function such that $F|_{\Om}$ is in 
  $\Np(\Om)$. Then $F$ is resolutive and $P_{\clOmP}F=H_{\Om}F$.
\end{thm}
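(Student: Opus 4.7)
The plan is to establish both $\uHp_{\clOmP} F \leq H_\Om F$ and $\lHp_{\clOmP} F \geq H_\Om F$; combined with the comparison inequality $\lHp_{\clOmP} F \leq \uHp_{\clOmP} F$ recalled above, these force $P_{\clOmP} F = H_\Om F$. Because $H_\Om F$ is $p$-harmonic (hence continuous and both $p$-sub- and $p$-superharmonic) on $\Om$, only the liminf/limsup behavior of candidate functions at each prime end needs verification. Truncating $F$ by $F_N := \max(-N, \min(F, N))$ and using monotone convergence of the upper/lower Perron solutions together with $\Np$-continuity of $H_\Om$, I would first reduce to the case $|F| \leq M$.

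Given $\eps > 0$, the $\CapPp$-quasicontinuity of $F$ combined with the outer-capacity property of $\CapPp$ produces an open set $U_\eps \subset \clOmP$ and a nonnegative function $w_\eps \in \Np(\Om)$ satisfying: $\CapPp(U_\eps) < \eps^p$, $F|_{\clOmP \setminus U_\eps}$ is continuous in the prime end topology, $w_\eps \geq 1$ on $U_\eps \cap \Om$, $\liminf_{\Om \ni y \overset{\clOmP}{\to} x} w_\eps(y) \geq 1$ for each $x \in U_\eps \cap \bdP \Om$, and $\|w_\eps\|_{\Np(\Om)} < 2\eps$. After replacing $w_\eps$ by the solution of a suitable obstacle problem, I may assume $w_\eps$ is $p$-superharmonic on $\Om$. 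Next, I would construct a continuous $G_\eps : \clOmP \to [-M, M]$ that agrees with $F$ on $\clOmP \setminus U_\eps$ and whose restriction to $\Om$ lies in $\Np(\Om)$ (for instance by patching $F$ with constants through a cutoff built from $w_\eps$), and set
\[
u_\eps := H_\Om G_\eps + 2M w_\eps,
\]
which is $p$-superharmonic and bounded. At each $[E_n] \in \bdP \Om \setminus U_\eps$, the prime end continuity of $G_\eps$ together with \cite[Corollary~5.4]{ES} (whose proof depends on Assumption~\ref{assumption-ends}) gives $\liminf_{y \to [E_n]} H_\Om G_\eps(y) \geq G_\eps([E_n]) = F([E_n])$; at prime ends lying in $U_\eps$ the contribution $2M w_\eps$ alone dominates $|F|$ in liminf. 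Hence $u_\eps \in \UU_F(\clOmP)$, giving $\uHp_{\clOmP} F \leq H_\Om G_\eps + 2M w_\eps$ on $\Om$.

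Letting $\eps \to 0$, the $\Np(\Om)$-convergence $G_\eps \to F$ (since $G_\eps - F$ is supported where $w_\eps$ is bounded below by a positive constant) combined with continuity of $H_\Om$ under $\Np$-convergence of data yields $H_\Om G_\eps(x) \to H_\Om F(x)$ for each $x \in \Om$, while the energy bound $\|w_\eps\|_{\Np(\Om)} < 2\eps$ forces $w_\eps(x) \to 0$ along a subsequence via local H\"older control of the Newtonian representative. Thus $\uHp_{\clOmP} F \leq H_\Om F$, and the identical construction applied to $-F$ yields $\lHp_{\clOmP} F \geq H_\Om F$, completing the proof.

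I expect the main obstacle to be the boundary inequality at prime ends in $\bdP \Om \setminus \bdySP \Om$: the prime end topology is only $T_1$ and need not be Hausdorff, so constructing $G_\eps$ that simultaneously extends $F$ continuously in this topology and restricts to a Newtonian function on $\Om$ is delicate. Moreover, verifying that a sequence in $\Om$ converging to a prime end in $U_\eps$ eventually meets the set where $w_\eps$ is close to $1$ relies on the sequential definition of convergence in $\clOmP$ and on Assumption~\ref{assumption-ends} through the comparison principle \cite[Proposition~7.3]{ES}; these are the technical hinges the whole argument swings on.
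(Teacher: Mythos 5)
First, a point of orientation: the paper does not prove this statement itself --- it is quoted verbatim from \cite{ES} (Theorem~7.7 there) and used as a black box, so there is no internal proof to compare against. Your overall strategy (show $\uHp_{\clOmP}F\le H_\Om F$ by exhibiting members of $\UU_F(\clOmP)$ close to $H_\Om F$, apply the same to $-F$, and close with the comparison principle) is indeed the standard skeleton of the argument in \cite{ES} and its topological-boundary ancestor \cite{BBS2}.

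However, there is a genuine gap at the heart of your construction: you take $u_\eps := H_\Om G_\eps + 2M w_\eps$ with $H_\Om G_\eps$ $p$-harmonic and $w_\eps$ $p$-superharmonic, and assert that $u_\eps$ is $p$-superharmonic. For $p\ne 2$ this fails: the $p$-Laplace operator is nonlinear, and the sum of a $p$-harmonic function and a $p$-superharmonic function is in general \emph{not} $p$-superharmonic, so $u_\eps$ need not belong to $\UU_F(\clOmP)$ and the inequality $\uHp_{\clOmP}F\le H_\Om G_\eps+2Mw_\eps$ does not follow. The way \cite{ES} (following \cite{BBS2}) circumvents this is to avoid summing altogether: one builds a decreasing sequence of obstacles $\psi_j=F+c\,\eta_j$ (with $\eta_j=\sum_{k\ge j}w_{2^{-k}}$ tail sums of the capacity-admissible correctors, so that $\psi_j\downarrow F$ q.e.\ and in $\Np(\Om)$) and takes $u_j$ to be the lsc-regularized solution of the obstacle problem $K_{\psi_j,F}(\Om)$. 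Such $u_j$ is automatically $p$-superharmonic, lies above $\psi_j$ q.e.\ (which yields the liminf condition at prime ends in $U_j$ and, via the continuity of $F$ off $U_j$, at the remaining prime ends), and converges to $H_\Om F$ by stability of the obstacle problem under $\Np$-convergence of the obstacle. This also disposes of the secondary difficulty you flag yourself: no continuous extension $G_\eps$ in the (merely $T_1$) prime end topology is ever needed --- the quasicontinuous $F$ together with the correctors suffices --- so the delicate patching step can be dropped entirely. The remaining ingredients of your outline (truncation to bounded data, the outer-capacity property of $\CapPp$, the boundary liminf estimate via the comparison principle under Assumption~\ref{assumption-ends}) are sound.
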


 The following result shows the stability of the Perron solution under perturbations on a set of $\CapPp$ capacity zero.

\begin{thm}[Theorem 7.8 in \cite{ES}]\label{thm78}
  Let $f: \clOmP \to \R$ be a $\CapPp$-quasicontinuous function such that $f|_{\Om}$ is in $\Np(\Om)$. If 
  $h:\overline{\Om}^P\to \eR$ is zero in $\Om$ and is zero $\CapPp$ quasi-everywhere in $\bdP\Om$, 
  then $f+h$ is resolutive with respect to $\clOmP$, and
  $P_{\clOmP}(f+h)=P_{\clOmP} f$.
\end{thm}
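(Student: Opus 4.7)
The plan is to reduce the statement to Theorem~\ref{thm77} via two steps: first replace $h$ by a real-valued proxy $\tilde h$ so that $f+\tilde h$ becomes a $\CapPp$-quasicontinuous extension of $f|_\Om$, and then absorb the discrepancy between $h$ and $\tilde h$ on the set where $h$ is infinite by an arbitrarily small nonnegative $p$-superharmonic bump. Set $E:=\{x\in\bdP\Om : h(x)\ne 0\}$, so that $\CapPp(E)=0$, let $N:=\{|h|=\infty\}\subset E$, and let $\tilde h$ equal $h$ on $\clOmP\setminus N$ and $0$ on $N$.

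To verify $\CapPp$-quasicontinuity of $f+\tilde h$, fix $\eps>0$ and use the outer-capacity property (Proposition~6.3 in~\cite{ES}) to obtain an open set $V\supset E$ with $\CapPp(V)<\eps/2$, together with an open set $U$ coming from the quasicontinuity of $f$ satisfying $\CapPp(U)<\eps/2$ and $f|_{\clOmP\setminus U}$ real-valued and continuous. Because $\tilde h\equiv 0$ on $\clOmP\setminus V$, the restriction $(f+\tilde h)|_{\clOmP\setminus(U\cup V)}$ coincides with $f|_{\clOmP\setminus(U\cup V)}$ and is therefore real-valued and continuous. Since $h\equiv 0$ on $\Om$, we also have $(f+\tilde h)|_\Om=f|_\Om\in\Np(\Om)$, so Theorem~\ref{thm77} yields that $f+\tilde h$ is resolutive with $P_{\clOmP}(f+\tilde h)=H_\Om(f|_\Om)=P_{\clOmP}f$.

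For the passage from $f+\tilde h$ to $f+h$, I would construct, for each $\eps>0$, a nonnegative $p$-superharmonic function $w_\eps$ in $\Om$ with $\|w_\eps\|_{\Np(\Om)}^p<\eps$ and $\liminf_{\Om\ni y\to x}w_\eps(y)=\infty$ for every $x\in N$: choose nested open sets $V_j\subset\clOmP$ with $N\subset V_j$ and $\CapPp(V_j)<2^{-j}\eps$, pick $\varphi_j\in\A_{V_j}$ of correspondingly small $\Np$-norm, form $\Phi=\sum_j\varphi_j\in\Np(\Om)$, and take $w_\eps$ as the solution of the obstacle problem with obstacle $\Phi$ (cf.~\cite[Chapter~9]{BBbook}). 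For any $u\in\UU_{f+\tilde h}(\clOmP)$ the perturbation $u+w_\eps$ then lies in $\UU_{f+h}(\clOmP)$: at points where $h=+\infty$ the blow-up of $w_\eps$ supplies the required infinite lower bound, at points where $h=-\infty$ the lower-limit condition is vacuous, and elsewhere it reduces to the condition already satisfied by $u$. Taking the infimum over $u$ and then $\eps\to 0$ yields $\uHp_{\clOmP}(f+h)(x)\le P_{\clOmP}f(x)$ for each $x\in\Om$; the analogous construction applied to $-h$ via $\lHp_{\clOmP}$, combined with the prime-end comparison principle (Proposition~7.3 in~\cite{ES}), closes the sandwich.

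The main obstacle I expect is showing $\inf_\eps w_\eps(x)=0$ at each interior $x\in\Om$: the construction controls only the $\Np(\Om)$-norm of $w_\eps$, and converting this into pointwise smallness at a fixed point requires a quantitative interior estimate for nonnegative $p$-superharmonic functions in terms of their energy, together with care that the obstacle-problem lift does not destroy the small-norm bound. In the frequent special case where $h$ is real-valued, $\tilde h=h$ and the second step is unnecessary, so the argument collapses to the quasicontinuity observation plus Theorem~\ref{thm77}.
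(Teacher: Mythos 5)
Your first step is correct and is worth keeping: for real-valued $h$, the outer-capacity property of $\CapPp$ (\cite[Proposition~6.3]{ES}) encloses $\{h\ne 0\}$ in an open set of small capacity, so $f+h$ is $\CapPp$-quasicontinuous with $(f+h)|_\Om=f|_\Om\in\Np(\Om)$, and Theorem~\ref{thm77} finishes that case in one stroke. (Note the paper itself gives no proof of this statement; it is imported from~\cite{ES}, whose argument follows \cite[Theorem~6.2]{BBS2}.)

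The second step, however, contains a gap more serious than the one you flag. You place $u+w_\eps$ in $\UU_{f+h}(\clOmP)$ for $u\in\UU_{f+\tilde h}(\clOmP)$, but for $p\ne 2$ the sum of two $p$-superharmonic functions is in general \emph{not} $p$-superharmonic: Definition~\ref{def-p-super} is a comparison condition for a nonlinear operator, and the class of $p$-superharmonic functions is a cone closed under minima, positive scalars and additive constants, but not under addition. So $u+w_\eps$ need not be an admissible competitor, and the inequality $\uHp_{\clOmP}(f+h)\le \uHp_{\clOmP}(f+\tilde h)+w_\eps$ does not follow. The obstacle you do identify is also genuine: a nonnegative $p$-superharmonic function of small $\Np(\Om)$-norm need not be small at a prescribed interior point (capacitary potentials of tiny balls around that point have small energy but value $1$ there), and no weak Harnack estimate bounds the lsc-regularized value at a single point from above by the energy. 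The argument in~\cite{ES} avoids both problems: choose open $U_j\supset\{h\ne 0\}$ with admissible $u_j\ge 0$, $\|u_j\|^p_{\Np(\Om)}<2^{-j}$, set $\psi_k=\sum_{j\ge k}u_j$, so that $\|\psi_k\|_{\Np(\Om)}\to 0$ while $\liminf_{\Om\ni y\to x}\psi_k(y)=\infty$ whenever $h(x)\ne 0$; then $f-\psi_k\le f+h\le f+\psi_k$ on $\bdP\Om$, the inequalities $\uHp_{\clOmP}G\le H_\Om G\le \lHp_{\clOmP}G$ (established in the proof of Theorem~\ref{thm77} for quasicontinuous $G$ with $G|_\Om\in\Np(\Om)$, via decreasing sequences of obstacle-problem solutions, which \emph{are} legitimate superharmonic competitors) apply to $G=f\pm\psi_k$, and $H_\Om(f\pm\psi_k)\to H_\Om f$ locally uniformly because $\|\psi_k\|_{\Np(\Om)}\to 0$. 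If you wish to keep your two-step structure, the passage from $f+\tilde h$ to $f+h$ must be rerouted through this obstacle-problem and $\Np$-stability argument rather than through additive superharmonic perturbations.
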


We now summarize the assumptions we make throughout the paper: \emph{we assume that $X$ is complete and that the measure
$\mu$ on $X$ is doubling and supports a $p$-Poincar\'e inequality (for a fixed $1<p<\infty$). We also assume
that $\Om\subset X$ is a domain with $\mu(X\setminus\Om)>0$ such that $\Om$ satisfies Assumption~\ref{assumption-ends}.}

\section{The collection of all non-singleton prime ends is a prime end capacitary null set}

In light of Theorems~\ref{thm78}, it is desirable to know which prime ends influence the Perron solution.
Given that the Newtonian Sobolev class $N^{1,p}$ considered here is based on rectifiable curves, it would be natural to know that
the collection of all rectifiably inaccessible singleton prime ends and the collection of all non-singleton prime ends
do not play a role in determining the Perron solutions. We will prove this natural claim in this section by showing that
these two classes of prime ends have $\CapPp$-capacity zero.

\begin{lem}\label{lem:Excp:NR}
Let $\mathcal{C}$ be the collection of all prime ends in $\Om$ that are infinitely far away. Then
$\CapPp(\mathcal{C})=0$.
\end{lem}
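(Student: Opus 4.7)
The plan is to build an admissible function of arbitrarily small Newtonian norm for the prime-end capacity of $\mathcal{C}$ using the inner (length) distance in $\Om$. Fix $x_0\in\Om$ and define
\[
d_\inner(x,x_0):=\inf\{\ell(\gamma) : \gamma \text{ rectifiable curve in }\Om \text{ from }x_0\text{ to }x\}.
\]
Since $X$ is quasiconvex and $\Om$ is open and connected, $\Om$ is locally quasiconvex and rectifiably connected, so $d_\inner(\cdot,x_0):\Om\to[0,\infty)$ is continuous (hence Borel measurable), and for each $x\in\Om$ we have $d_\inner(x,x_0)<\infty$. For $R>0$ put $\Om_R:=\{x\in\Om:d_\inner(x,x_0)\le R\}$ and
\[
u_R(x):=\min\bigl(1,\max(0,\,R^{-1}d_\inner(x,x_0)-1)\bigr),
\]
so that $u_R\equiv 0$ on $\Om_R$ and $u_R\equiv 1$ on $\Om\setminus\Om_{2R}$.

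Next I would verify $u_R\in\Np(\Om)$ with small norm. Along any rectifiable curve $\gamma$ in $\Om$ joining $x,y$, the triangle inequality for $d_\inner$ gives $|d_\inner(x,x_0)-d_\inner(y,x_0)|\le d_\inner(x,y)\le\ell(\gamma)$, so $g\equiv 1/R$ is an upper gradient of $u_R$. Since $0\le u_R\le\chi_{\Om\setminus\Om_R}$,
\[
\|u_R\|_{\Np(\Om)}^p\le \mu(\Om\setminus\Om_R)+\frac{\mu(\Om)}{R^p}.
\]
Since $\Om$ is bounded (hence $\mu(\Om)<\infty$) and $\bigcup_R\Om_R=\Om$, continuity of measure gives $\mu(\Om\setminus\Om_R)\to 0$; hence $\|u_R\|_{\Np(\Om)}^p\to 0$ as $R\to\infty$.

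It remains to check that each $u_R$ lies in the admissible class $\A_{\mathcal{C}}$. Condition (1) is vacuous since $\mathcal{C}\cap\Om=\emptyset$. For condition (2), let $[E_k]\in\mathcal{C}$ and let $\{y_j\}\subset\Om$ be any sequence with $y_j\to[E_k]$ in $\clOmP$. Choose rectifiable curves $\gamma_j$ in $\Om$ from $x_0$ to $y_j$ with $\ell(\gamma_j)\le d_\inner(y_j,x_0)+1$; by the definition of infinitely-far-away (together with the remark that the property is independent of the base point $x_0$), $\ell(\gamma_j)\to\infty$, so $d_\inner(y_j,x_0)\to\infty$. Thus eventually $y_j\in\Om\setminus\Om_{2R}$, giving $u_R(y_j)=1$ and therefore $\liminf_{y\to[E_k]}u_R(y)\ge 1$. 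Hence $u_R\in\A_{\mathcal{C}}$ and
\[
\CapPp(\mathcal{C})\le\|u_R\|_{\Np(\Om)}^p\longrightarrow 0,
\]
proving the lemma.

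The only subtle step is the continuity of $d_\inner(\cdot,x_0)$ (needed so that $u_R$ is Borel, hence that $g=1/R$ serves as an upper gradient in the sense of Newtonian spaces); this is where the standing quasiconvexity of $X$ (from the doubling/Poincar\'e hypotheses) does the work, since it forces local quasiconvexity of $\Om$ and therefore both upper and lower semicontinuity of $d_\inner(\cdot,x_0)$. Everything else reduces to the exhaustion $\Om=\bigcup_R\Om_R$ and the length characterization of ``infinitely far away'' ends.
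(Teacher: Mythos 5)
Your proof is correct and follows essentially the same route as the paper's: both arguments use the inner (length) distance from a fixed basepoint $x_0$, truncated and rescaled so that the constant $1/R$ (the paper's $\eps$) serves as an upper gradient, with admissibility coming directly from the definition of infinitely far away prime ends and the norm tending to zero by exhausting $\Om$ by inner-metric balls. The only cosmetic differences are your choice of truncation (vanishing on all of $\Om_R$ rather than being $\eps\, d_{\inner}$ capped at $1$) and your use of continuity of $d_{\inner}$ for measurability where the paper cites a lemma from~\cite{abbs}.
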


\begin{proof}
Fix $x_0\in\Om$, and
for $\eps>0$ let $u_\eps$ be given by
\[
 u_\eps(x)=\min\{1,\eps \inf_{\gamma_x}\ell(\gamma_x)\},
\]
where the infimum is taken over all rectifiable curves in $\Om$ connecting $x_0$ to $x$. 
Since $X$ is quasiconvex, connectedness of $\Om$ 
is equivalent to its rectifiable connectedness. Note also that $u_\eps$ is finite-valued in $\Om$.

By Definition~\ref{NSP-R+NR}, if $\{x_j\}_j$ is a sequence in $\Om$ converging to $[E_k]\in\mathcal{C}$, 
then for curves $\gamma_j$ connecting $x_0$ to $x_j$, it holds that $\lim_j l(\gamma_j)=\infty$. As a consequence,
\[
\lim_{j\to \infty} u_\eps(x_j)=1,
\]
and hence, $u_\eps$ satisfies condition~(2) in Definition~\ref{def:capP}, 
and so is admissible for computing the prime end capacity of  $\mathcal{C}$. 

Note that $\lim_{\eps\to 0^+}u_\eps=0$ pointwise in $\Om$, and $0\le u_\eps\le 1$ on $\Om$. Therefore, the
Lebesgue dominated convergence theorem implies that $u_\eps\to 0 \hbox{ in } L^p(\Om)$.

We now show that $u_\eps\in \Np(\Om)$ by proving that the function $\rho_\eps=\eps$ is an upper gradient 
of $u_\eps$. Let $x,y\in \Om$ and $\gamma_{xy}$ be a curve joining $x$ and $y$ in $\Om$, 
while $\gamma_{yx_0}$ be a curve joining $y$ and $x_0$ in $\Om$. Furthermore, without loss of generality, 
suppose that $u_{\eps}(x)> u_{\eps}(y)$. Then, by the definition of $u_{\eps}$,
\[
 u_{\eps}(x)\leq \eps \ell(\gamma_{yx_0}+\gamma_{xy})=\int \limits_{\gamma_{yx_0}\,+\,\gamma_{xy}}\eps\,ds
   =\int \limits_{\gamma_{yx_0}}\eps\,ds+\int \limits_{\gamma_{xy}} \eps\,ds,
\]
as curve $\gamma_{yx_0}+\gamma_{xy}$ is admissible for $u_{\eps}$. 
As $u_\eps(y)<u_\eps(x)\le 1$, it follows that $u_\eps(y)\ne 1$. Therefore,
by taking the infimum 
over all rectifiable curves $\gamma_{yx_0}$ we arrive at
\[
 u_{\eps}(x)\leq \int \limits_{\gamma_{xy}} \eps\,ds + u_{\eps}(y).
\]
As we assumed that $u_\eps(x)> u_\eps(y)$, it follows that
\[
|u_\eps(x)-u_\eps(y)|=u_\eps(x)-u_\eps(y)\le \int_{\gamma_{xy}}\eps \, ds.
\]
Thus, $\rho_\eps=\eps$ satisfies the definition of an upper gradient of $u_{\eps}$. 
Lemma~A.2 in~\cite{abbs} allows us to infer that $u_{\eps}$ is measurable in $\Om$. 
The conclusion of the lemma now follows from the fact that 
\[
\lim_{\eps\to 0^+}\Vert u_\eps\Vert_{N^{1,p}(\Om)}=0.
\]
\end{proof}

In the next lemma we associate with every finitely away prime end in $\clOmP$ and with 
every inaccessible singleton prime end in $\clOmP$ a curve $\beta$, such that its substantial part is contained 
in $\partial \Om$. This result is then used to show Proposition~\ref{prop:exceptional}, which deals with a claim 
analogous to Lemma~\ref{lem:Excp:NR} for this class of prime ends.

\begin{lem}\label{lem:Positivity-At-Bdry}
Let $[E_k]$ be a finitely away prime end or a rectifiably inaccessible prime end,
and $\gamma_j$ be a sequence of rectifiable curves in $\Om$ with
end points $x_0,x_j\in\Om$ such that $x_j\to[E_k]$ as $j\to\infty$ and
$\sup_j\ell(\gamma_j)<\infty$. Suppose that $\beta:[0,L]\to \overline{\Om}$ is an
arc-length parametrized rectifiable curve such that $\gamma_j\to\beta$ uniformly as $j\to \infty$. Then 
$\mathcal{H}^1(|\beta|\cap\partial\Om)>0$ and for each $\eps>0$ there exists $j_\eps\in\N$ such that for $j>j_\eps$,
at least $\mathcal{H}^1(|\beta|\cap\partial\Om)/2$ length of the curve $\gamma_j$ is within a distance $\eps$ of
$X\setminus\Om$. 
\end{lem}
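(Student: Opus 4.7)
The plan for Part~1 ($\mathcal{H}^1(|\beta|\cap\partial\Om)>0$) is to argue by contradiction: assume $\mathcal{H}^1(|\beta|\cap\partial\Om)=0$ and produce from $\beta$ a rectifiable curve $\tilde\gamma:[0,1]\to\overline{\Om}$ satisfying $\tilde\gamma([0,1))\subset\Om$, $\tilde\gamma(1)=\beta(L)$, and the tail condition that for each $k\in\N$ there is some $t_k\in(0,1)$ with $\tilde\gamma((t_k,1))\subset E_k$. By Remark~\ref{rem:SP} the existence of such a $\tilde\gamma$ forces $[E_k]$ to be a singleton prime end with impression $\{\beta(L)\}$, which contradicts either the non-singleton hypothesis $[E_k]\in\bdP\Om\setminus\bdySP\Om$ in the finitely away case, or the rectifiable inaccessibility of $[E_k]$ in the singleton case. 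The preliminary fact that $\beta(L)\in I[E_k]\subset\partial\Om$ follows from uniform convergence: $x_j\to\beta(L)$ in $X$ and $x_j\in E_k$ eventually for each $k$, so $\beta(L)\in\bigcap_k\overline{E}_k=I[E_k]$.

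For the construction of $\tilde\gamma$, I would first upgrade $\mathcal{H}^1(|\beta|\cap\partial\Om)=0$ to $\mathcal{L}^1(F)=0$ for the closed set $F:=\beta^{-1}(\partial\Om)\subset[0,L]$, using the area formula for the $1$-Lipschitz map $\beta$. Then $F^c$ is a countable disjoint union of open intervals $(a_m,b_m)$ with $\sum_m(b_m-a_m)=L$, on each of which $\beta$ takes values in $\Om$. The curve $\tilde\gamma$ is assembled by concatenating the pieces $\beta|_{(a_m,b_m)}$ in parameter order and inserting short connectors inside $\Om$ across the (collectively measure-zero) gaps; its tail property is then read off from uniform convergence $\gamma_j\to\beta$ together with the fact that the tails of the approximating curves $\gamma_j\subset\Om$ eventually lie in $E_k$ for each $k$.

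The hardest step is the construction and length-control of these interior connectors: the $1$-Lipschitz bound on $\beta$ only controls the ambient distance $d(\beta(b_m),\beta(a_{m+1}))$, while staying inside $\Om$ between two near-boundary points requires paths whose length is governed by the Mazurkiewicz distance $d_M$, which may blow up near $\partial\Om$. To cope with this I would exploit the chain condition~(b) in Definition~\ref{def-chain}, which prevents the $d_M$-distance between consecutive chain boundaries from collapsing to zero, together with the approximating curves $\gamma_j\subset\Om$ themselves as templates for the interior connectors, so that the total added length remains finite and the concatenation produces a genuinely rectifiable curve with the required tail behavior.

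For Part~2 the argument is almost immediate from uniform convergence. For $\eps>0$ and $j\ge j_\eps$ the inequality $d(\gamma_j(t),\beta(t))<\eps/2$ holds on $[0,L]$, so for every $t\in T:=\beta^{-1}(\partial\Om)$ the point $\gamma_j(t)$ lies in the $\eps$-neighborhood of $X\setminus\Om$. The $1$-Lipschitz property of $\beta$ yields $\mathcal{L}^1(T)\ge\mathcal{H}^1(\beta(T))=\mathcal{H}^1(|\beta|\cap\partial\Om)$, and a comparison of the parametrizations of $\gamma_j$ and $\beta$ (using the lower-semicontinuity bound $\liminf_j\ell(\gamma_j)\ge L$ combined with $\sup_j\ell(\gamma_j)<\infty$) then shows that the arc-length measure of $\{s:\gamma_j(s)$ within distance $\eps$ of $X\setminus\Om\}$ is at least $\mathcal{L}^1(T)/2\ge\mathcal{H}^1(|\beta|\cap\partial\Om)/2$ for $j$ sufficiently large.
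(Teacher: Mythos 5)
Your overall architecture for Part~1 matches the paper's: assume $\mathcal{H}^1(|\beta|\cap\partial\Om)=0$, construct a rectifiable curve in $\Om$ accessing $\beta(L)\in I[E_k]$ through the chain, and contradict either non-singletonness (via Remark~\ref{rem:SP}) or rectifiable inaccessibility. The reduction to $\mathcal{L}^1(\beta^{-1}(\partial\Om))=0$ via the area formula for the arc-length parametrization is correct, and your Part~2 is sound. But the step you yourself flag as ``the hardest'' --- producing the interior connectors --- is precisely the technical content of the lemma, and you do not supply it: invoking condition~(b) of Definition~\ref{def-chain} together with ``the $\gamma_j$ as templates'' is a statement of intent, not an argument. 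Moreover, your chosen decomposition makes the problem harder than necessary. The components $(a_m,b_m)$ of $[0,L]\setminus\beta^{-1}(\partial\Om)$ have endpoints $\beta(a_m),\beta(b_m)\in\partial\Om$, so the concatenation of the pieces $\beta|_{(a_m,b_m)}$ passes through boundary points unless each piece is truncated; after truncation you must join two points arbitrarily close to $\partial\Om$, and neither the $1$-Lipschitz bound on $\beta$ nor condition~(b) (which controls the $d_M$-separation of the sets $\Om\cap\partial E_k$, not of arbitrary near-boundary points) bounds the length of such a connector. Since $\beta^{-1}(\partial\Om)$ may be a Cantor-type null set, the gaps also do not come in a discrete order, so ``consecutive'' components need not exist.

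The missing idea, and the route the paper takes, is to discard the pieces of $\beta$ (except an initial segment) and instead pick a discrete sequence $t_m\nearrow L$ with $\beta(t_m)\in\Om\cap E_{m+1}$ and $\beta|_{[t_m,t_{m+1}]}\subset\overline{E}_{m+1}$; the hypothesis $\mathcal{H}^1(|\beta|\cap\partial\Om)=0$ is only needed to guarantee that $\beta^{-1}(\Om)$ accumulates at $L$. The body of the accessing curve is then made of the segments $\gamma_{j(m)}|_{[t_m,t_{m+1}]}$ of the approximating curves for suitably large $j(m)$ --- these lie in $\Om$, eventually in $E_{m-1}$, and have length at most $(1+2^{-m})|t_{m+1}-t_m|$ --- glued by $C$-quasiconvex arcs joining $\beta(t_m)$ to $\gamma_{j(m)}(t_m)$. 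Both of these points lie in $\Om\cap E_m$ at mutual distance less than $r_m:=(4C)^{-m-1}\min\{\dist(\beta(t_m),\partial E_m),\dist(\beta(t_{m+1}),\partial E_{m+1}),1\}$, so the connecting arcs have length at most $4^{-m}$ and stay inside $E_m$; the total length is at most $3L+1$ and the tail condition holds by construction. Note finally that this argument yields a strictly stronger conclusion than $\mathcal{H}^1(|\beta|\cap\partial\Om)>0$, namely that a whole terminal segment $\beta|_{[s,L]}$ lies in $\partial\Om$, which is how the paper obtains Part~2 at once; your Part~2, based on $\mathcal{L}^1(\beta^{-1}(\partial\Om))\ge\mathcal{H}^1(|\beta|\cap\partial\Om)$ and $\liminf_j\ell(\gamma_j)\ge L$, reaches the same conclusion without that strengthening and is acceptable as written.
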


Since $X$ is proper and each $\gamma_j$ is (under its arc-length parametrization) $1$-Lipschitz, an application
of the Arzela-Ascoli theorem tells us that after passing to a subsequence if necessary, we do always have 
a curve $\beta$ in $\overline{\Om}$ such that $\gamma_j$ converges uniformly to $\beta$.

\begin{proof}
Note that $x_0$ is one end point of $\beta$. Let $x_\infty$ be the other end point of $\beta$. Then
as $x_j\to[E_k]$, it follows that
$x_\infty\in I[E_k]$.  Let $L:=\sup_j\ell(\gamma_j)$. We may assume that each $\gamma_j$ is arc-length
parametrized and then extended by constant to $[\ell(\gamma_j),L]$, so $\gamma_j:[0,L]\to \Omega$
is $1$-Lipschitz. We can then represent $\beta$ by the parametrization $\beta:[0,L]\to\overline{\Omega}$
given by $\beta(t)=\lim_{j\to\infty}\gamma_j(t)$.

Suppose that
$\mathcal{H}^1(|\beta|\cap\partial\Om)=0$. Then we can find a sequence $t_m\in[0,L]$, with
$\beta(t_m)\to I([E_k])$ and $t_m$ increasing
strictly monotonically as $m\to \infty$, such that $\beta(t_m)\in\Om$. We can also choose
$t_1$ so that
$\beta([0,t_1])\subset\Om$. We can furthermore ensure by Remark~\ref{rem:Curve-within-End} that for 
each $m\in\N$ the points $\beta(t_m)$ lie in $E_{m+1}$ and $\beta\vert_{[t_m,t_{m+1}]}\subset\overline{E}_{m+1}$.
The latter is thanks to knowing that $\ell(\beta)<\infty$ and there is a fixed positive $d_M$-distance 
between $\Om\cap\partial E_m$ and $\Om\cap\partial E_{m+1}$.
Let
\[
r_m=(4C)^{-m-1}\, \min\big\{\text{dist}(\beta(t_m),\partial E_m),  \text{dist}(\beta(t_{m+1}),\partial E_{m+1}), 1\big\}.
\]
Here $C$ is the quasiconvexity constant of $X$.
Since $\gamma_j\to\beta$ uniformly, there is a sufficiently large $j$ 
such that the segment $\beta_m:=\gamma_j\vert_{[t_m,t_{m+1}]}$ of $\gamma_j$ 
satisfies
\[
\ell(\beta_m)\le [1+2^{-m}]|t_{m+1}-t_m|,
\] 
and
\[
d(\beta(t_m),\beta_m(t_m))<r_m,\qquad d(\beta(t_{m+1}),\beta_m(t_{m+1}))<r_m.
\]
By choosing $j$ to be large enough, we can also ensure that $\beta_m$ is contained in $E_{m-1}$.
Next, by the quasiconvexity of $X$, we can find $C$-quasiconvex curves
$\alpha_m,\widehat{\alpha}_m$ with end points $\beta(t_m),\beta_m(t_m)$ and end points 
$\beta(t_{m+1}), \beta_m(t_{m+1})$, respectively. Then 
\[
\ell(\alpha_m)+\ell(\widehat{\alpha}_m)\le 4^{-m},
\]
with $\alpha_m\subset E_m$ and $\widehat{\alpha}_m\subset E_{m+1}$.

Now the concatenated curve $\Gamma:[0, \ell(\Gamma)]\to X$ given by
\[
\Gamma=\beta\vert_{[0,t_1]}+\sum_m [\alpha_m+\beta_m+\widehat{\alpha}_m]
\]
is a rectifiable curve with
\[
  \ell(\Gamma)\le \ell(\beta)+\sum_m\bigg(4^{-m}+\ell(\beta_m\vert_{[t_m,t_{m+1}]})\bigg)
    \le 3L+1<\infty.
\]
Moreover, $\Gamma([0,\ell(\Gamma))\subset\Om$, $\Gamma(\ell(\Gamma))=x_\infty$, and for each $m$ a tail end of $\Gamma$ 
lies in $E_m$. More precisely, there is some $s_m$ such that
$\Gamma((s_m,\ell(\Gamma)))\subset E_m$. This makes $x_\infty$ accessible through the end $[E_k]$
(see also Remark~\ref{rem:SP}), which makes
$[E_k]$ an accessible prime end, violating the assumption that it is either a non-singleton prime end (that is 
finitely away) or an inaccessible prime end,
and so it must be that $\mathcal{H}^1(|\beta|\cap\partial\Om)>0$.

 Note that in the above proof, we only needed a sequence $t_m$, $m\in\N$, with $t_m\to L$ as $m\to\infty$, such that
$\beta(t_m)\in\Om\cap E_m$ in order to gain a contradiction. Thus the above proof gives us a stronger conclusion; 
namely, there is some $s\in (0, \ell(\beta))$ such that $\beta\vert_{[s,\ell(\beta)]}\subset\partial\Om$. Now the uniform convergence of 
$\gamma_j$ to $\beta$ yields the verity of the final claim of the lemma.
\end{proof}

The following is the first main result of this paper.

\begin{thm}\label{prop:exceptional}
Let $\mathcal{F}$ be the collection of all  non-singleton prime ends of $\Om$ together with all
singleton prime ends that are rectifiably inaccessible. Then
$\CapPp(\mathcal{F})=0$.
\end{thm}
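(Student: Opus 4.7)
The plan is to combine Lemma~\ref{lem:Excp:NR} with Lemma~\ref{lem:Positivity-At-Bdry} via a family of test functions penalising both length and boundary proximity, and then exploit countable sub-additivity of the outer measure $\CapPp$ (Lemma~6.2 of~\cite{ES}). Since Lemma~\ref{lem:Excp:NR} already shows that the infinitely far away prime ends form a $\CapPp$-null set, it suffices to prove $\CapPp(\mathcal{F}')=0$ where $\mathcal{F}'\subset\mathcal{F}$ denotes the finitely away non-singleton prime ends together with the rectifiably inaccessible singleton prime ends.

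Fix $x_0\in\Om$ and set $V_\eta:=\{y\in\Om:\dist(y,X\setminus\Om)<\eta\}$. Since $\Om$ is open we have $\bigcap_\eta V_\eta=\emptyset$, and the finiteness of $\mu$ on $\overline{\Om}$ then gives $\mu(V_\eta)\to 0$ as $\eta\to 0^+$; pick $\eta_n\to 0$ with $\mu(V_{\eta_n})<n^{-p-1}$. With $g_n:=n^{-1}\chi_{\Om}+n\chi_{V_{\eta_n}}$, set
\[
u_n(x):=\min\Bigl\{1,\,\inf_\gamma \int_\gamma g_n\,ds\Bigr\},
\]
the infimum being over all rectifiable curves in $\Om$ from $x_0$ to $x$. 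Following the proof of Lemma~\ref{lem:Excp:NR}, $g_n$ is an upper gradient of $u_n$, and $u_n$ is measurable. For fixed $x\in\Om$, choosing a curve $\gamma_x$ from $x_0$ to $x$ with $|\gamma_x|$ compact in $\Om$ we have $|\gamma_x|\cap V_{\eta_n}=\emptyset$ for large $n$, hence $u_n(x)\le n^{-1}\ell(\gamma_x)\to 0$. Lebesgue dominated convergence then yields $\|u_n\|_{L^p(\Om)}\to 0$, while $\|g_n\|_{L^p(\Om)}^p \le 2^p(n^{-p}\mu(\Om)+n^p\mu(V_{\eta_n}))\le 2^p(n^{-p}\mu(\Om)+n^{-1})\to 0$, so $\|u_n\|_{\Np(\Om)}\to 0$.

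Admissibility is then established on an exhaustion. Any $[E_k]$ in the infinitely far away class gives $\int g_n\,ds\ge\ell(\gamma)/n\to\infty$ along approach sequences, so $u_n$ is admissible there for every $n$. Now fix $[E_k]\in\mathcal{F}'$ and suppose $\liminf_j u_n(y_j)<1$ for some $y_j\to[E_k]$. Curves $\gamma_j$ from $x_0$ to $y_j$ that nearly attain the infimum then satisfy $\ell(\gamma_j)<n$ and $\mathcal{H}^1(|\gamma_j|\cap V_{\eta_n})<n^{-1}$. Properness of $X$ combined with Arzela--Ascoli extracts a uniform subsequential limit $\beta$ joining $x_0$ to a point of $I[E_k]$, to which Lemma~\ref{lem:Positivity-At-Bdry} applies: $\delta:=\mathcal{H}^1(|\beta|\cap\partial\Om)>0$ and $\mathcal{H}^1(|\gamma_j|\cap V_{\eta_n})\ge\delta/2$ for all large $j$, forcing $n<2/\delta$. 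Consequently, to each $[E_k]\in\mathcal{F}'$ corresponds some $N([E_k])\in\N$ such that $u_n$ is admissible at $[E_k]$ for every $n\ge N([E_k])$. Setting $\mathcal{F}'_N:=\{[E_k]\in\mathcal{F}':N([E_k])\le N\}$, we obtain $\CapPp(\mathcal{F}'_N)\le \|u_n\|_{\Np(\Om)}^p$ for all $n\ge N$, so $\CapPp(\mathcal{F}'_N)=0$, and countable sub-additivity gives $\CapPp(\mathcal{F}')\le\sum_N\CapPp(\mathcal{F}'_N)=0$, provided $\mathcal{F}'=\bigcup_N\mathcal{F}'_N$.

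The chief obstacle is exactly this last provision: showing $\delta^*_{[E_k]}:=\inf_\beta\mathcal{H}^1(|\beta|\cap\partial\Om)>0$ for every $[E_k]\in\mathcal{F}'$, where the infimum is over all uniform limits $\beta$ of bounded-length approach curves. Were $\delta^*_{[E_k]}=0$, a diagonal extraction over sequences $\beta_\ell$ with $\mathcal{H}^1(|\beta_\ell|\cap\partial\Om)\to 0$ should produce a new approach-curve limit $\beta^*$ that contradicts the rectifiably inaccessible nature of a singleton $[E_k]$ (or the non-singleton structure of its impression). The delicate issue is that the tails $\beta_\ell|_{[s_\ell,\ell(\beta_\ell)]}\subset\partial\Om$ collapse towards the endpoint as $\delta_{\beta_\ell}\to 0$ while $\beta^*$ may touch $\partial\Om$ at pinch points from inside $\Om$ without any corresponding tails of the $\beta_\ell$'s; the strengthened conclusion of Lemma~\ref{lem:Positivity-At-Bdry}, locating a positive-length tail of every $\beta$ inside $\partial\Om$, is expected to be the key tool for resolving this.
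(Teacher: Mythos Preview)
Your overall strategy coincides with the paper's: reduce to $\mathcal{F}'$ via Lemma~\ref{lem:Excp:NR}, build test functions that penalise both length and proximity to $\partial\Om$, and invoke Lemma~\ref{lem:Positivity-At-Bdry} to force admissibility. The gap you flag in your final paragraph is, however, genuine and is precisely what the paper's construction is designed to avoid.

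The difficulty with your single-scale penalty $g_n=n^{-1}\chi_\Om+n\chi_{V_{\eta_n}}$ is that the contradiction you extract from Lemma~\ref{lem:Positivity-At-Bdry} only yields $n<2/\delta$, where $\delta=\mathcal{H}^1(|\beta|\cap\partial\Om)$ depends on the particular limit curve $\beta$ produced by \emph{that} approach sequence. Nothing prevents $\delta$ from depending on $n$: for each $n$ you may get a different approach sequence, a different $\beta_n$, and $\delta_n<2/n\to 0$. Your diagonal repair attempt then faces two real obstacles: the curves $\beta_n$ satisfy only $\ell(\beta_n)\le n$, so there is no uniform length bound with which to run Arzel\`a--Ascoli again; and the $\beta_n$ live in $\overline{\Om}$ rather than $\Om$, so Lemma~\ref{lem:Positivity-At-Bdry} does not apply to them directly. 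I do not see how to close this without essentially rebuilding the argument.

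The paper sidesteps the issue by replacing your single boundary term with an \emph{infinite sum} of penalisations at all finer scales simultaneously: with $\rho_j=j\,\chi_{\Om_{\eta_j}}$ and $\mu(\Om_{\eta_j})<2^{-jp}$, it sets
\[
g_n=\frac{1}{n}+\sum_{j=n}^\infty \rho_j,\qquad
w_n(x)=\min\Bigl\{1,\inf_\gamma\int_\gamma g_n\,ds\Bigr\}.
\]
The series converges in $L^p(\Om)$, so $\|w_n\|_{\Np(\Om)}\to 0$ exactly as in your computation. The payoff is that $w_n$ is admissible for \emph{all} of $\mathcal{F}'$, for every $n$: if some approach sequence had $\int_{\gamma_l}g_n\,ds<1-\eps$, one passes to a limit $\beta$ with $L:=\mathcal{H}^1(|\beta|\cap\partial\Om)>0$ as you do, but now one may choose the scale $n_L\ge\max\{1/L,\,n+1\}$ \emph{after} seeing $L$, and since $\rho_{n_L}$ is already present in $g_n$ one gets $\int_{\gamma_l}\rho_{n_L}\,ds\ge n_L\cdot L/2\ge 1/2$ (in fact $\ge 1$ with the paper's constants) for large $l$, a contradiction. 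Thus no decomposition $\mathcal{F}'=\bigcup_N\mathcal{F}'_N$ and no uniform lower bound $\delta^*_{[E_k]}>0$ are needed; the multi-scale weight absorbs the dependence of $\delta$ on the approach sequence. Replacing your $g_n$ with this summed version fixes your proof with essentially no other changes.
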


\begin{proof}
By Lemma~\ref{lem:Excp:NR} we know that the collection of all prime ends that are infinitely far away has
$\CapPp$-capacity zero. Thus it suffices to focus on the subcollection of all finitely away prime ends together with
rectifiably inaccessible singleton prime ends.

For each $\eta>0$ we set $\Om_\eta:=\{x\in\Om\, :\, \text{dist}(x,X\setminus\Om)<\eta\}$. Then as $\Om$ is
bounded, $\lim_{\eta\to 0^+}\mu(\Om_\eta)=0$. Thus, for each positive integer $n$ we can find $\eta_n>0$ such that
$\mu(\Om_{\eta_n})<2^{-np}$. From now on let us fix such a sequence $(\eta_n)$.

Fix $\eps>0$ and for each positive integer $n$ set $\rho_n:=n\, \chi_{\Om_{\eta_n}}$. With this choice of $\rho_n$, let $w_n$ be a function on $\Om$ given by
\[
 w_n(x)=\min\bigg\lbrace 1,\inf_\gamma\int_\gamma\bigg(\frac{1}{n}+\sum_{j=n}^\infty\rho_j\bigg)\, ds\bigg\rbrace,
\]
where the infimum is over all rectifiable curves $\gamma$ connecting $x$ to $x_0$ in $\Om$.  Note that by the same 
reasoning as in the proof of Lemma~\ref{lem:Excp:NR}, we have that $g_n:=\frac{1}{n}+\sum_{j=n}^\infty\rho_j$ is an upper 
gradient of $w_n$, and that
\begin{equation}\label{upp-grad:prop}
\left(\int_\Om g_n^p\, d\mu\right)^{1/p}
\le \frac{\mu(\Om)^{\frac1p}}{n}+\sum_{j=n}^\infty j \mu(\Om_{\eta_j})^{1/p}\le \frac{\mu(\Om)^{\frac1p}}{n}+\sum_{j=n}^\infty 2^{-j}\, j\to 0,
\ \text{ as }n\to\infty.
\end{equation}

We claim that $w_n$ is admissible for computing $\CapPp(\mathcal{F})$.
To see this, suppose this is not the case. Then there is some $[E_m]\in \mathcal{F}$, $\eps>0$, and a
sequence $x_l\in\Om$ with $x_l\to [E_m]$ but $\sup_l w_n(x_l)\le 1-\eps$. 
Then for each $j$ there is a curve
$\gamma_l$ with end points $x_0$ and $x_l$ such that 
\[
\ell(\gamma_l)/n\le 1-\eps\text{ and }\int_{\gamma_l}\sum_{j=n}^\infty\rho_j\, ds<1-\eps. 
\]
In this
case we have that $\ell(\gamma_l)\le n(1-\eps)$, and hence, by passing to a further subsequence if necessary
(and invoking the Arzela-Ascoli theorem),
we have $\gamma_{l_k}\to\beta$ uniformly for some rectifiable curve $\beta$ in $\overline{\Om}$, and hence
by Lemma~\ref{lem:Positivity-At-Bdry}, there is some $L>0$ and some $m_0\in\N$ such that 
whenever $k\ge m_0$ we have $\mathcal{H}^1(|\gamma_{l_k}|\cap\Om_{\eta_{n_L}})\ge n_LL$ where
$n_L$ is the smallest positive integer that is not smaller than $\max\{1/L,n+1\}$. It then follows that, with the choice of 
$k_0=\max\{n+1,m_0+1\}$,
\[
1-\eps\ge \int_{\gamma_{l_{k_0}}}\sum_{j=n}^\infty\rho_j\, ds\ge \int_{\gamma_{l_{k_0}}}\rho_{n_L}\, ds\ge 1,
\]
which is not possible.
It follows that  $w_n$ is admissible.
The definition of $w_n$ together with~\eqref{upp-grad:prop} allow us to conclude that
$w_n\in \Np(\Om)$ for each $n\in\N$ and that $\lim_n\Vert w_n\Vert_{\Np(\Om)}=0$.
In addition from the above argument, we have $w_n\in\mathcal{A}_{\mathcal{F}}$ for each $n\in\N$.
Indeed, to see this it only remains to show that $\lim_n\int_\Om w_n^p\, d\mu=0$. This would follow from the
Lebesgue dominated convergence theorem if we know that $w_n\to 0$ pointwise in $\Om$. For each $n\in\N$
set $U_n$ to be the collection of all points $x\in \Om$ for which there is a curve $\gamma$ connecting $x$ to $x_0$
with $\gamma\subset\Om\setminus\Om_{\eta_n}$ and $\ell(\gamma)<\sqrt{n}$. It is not difficult to see that
$\bigcup_n U_n=\Om$, $U_n\subset U_{n+1}$,
and $w_n\le 1/\sqrt{n}$ on $U_n$  with $w_n\le 1$ on $\Om\setminus U_n$.
It follows that $w_n\to 0$ pointwise in $\Om$. 
Hence, $\CapPp(\mathcal{F})=0$ and the proof of the proposition is completed.
\end{proof}

\begin{remk}
 In \cite[Remark 8.4(b)]{ES} the following question was posed: \emph{Are there bounded domains for which
 \[
  \CapPp(\bdP\Om \setminus \bdySP \Om)>0\,?
 \]
 }
 The results of this section give the negative answer to this question. Indeed, by 
 Theorem~\ref{prop:exceptional} we have that the collection of all non-singleton prime ends 
 $\bdP\Om\setminus\bdySP\Om$ has $\CapPp$-capacity zero.
\end{remk}

\section{Resolutivity of functions that are $d_M$-Lipschitz outside $\mathcal{F}$}\label{sect:res}

The following is the second main result of the paper. 

\begin{thm}\label{thm:main}
Let $(X,d,\mu)$ be a complete metric  measure space equipped with a doubling measure $\mu$ supporting a
$p$-Poincar\'e inequality and $\Om\subset X$ be a bounded domain such that 
$\Cp(X\setminus\Om)>0$. 
Let $f:\bdP\Om\to\R$ such that its restriction to $\partial_{RSP}\Om$, the collection of all 
rectifiably accessible prime ends of $\Om$, is Lipschitz continuous with respect to the Mazurkiewicz metric $d_M$. Then
$f$ is resolutive and $P_{\overline{\Om}^P}f=H_\Om f$.

Furthermore, if $f_1, f_2$ are such functions with $f_1=f_2$ on $\bdyRSP\Om$, then
$P_{\overline{\Om}^P}f_1=P_{\overline{\Om}^P}f_2$ on $\Om$.
\end{thm}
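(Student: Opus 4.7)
My strategy is to reduce Theorem~\ref{thm:main} to Theorem~\ref{thm77} by constructing a $\CapPp$-quasicontinuous extension $F : \clOmP \to \R$ of $f$ satisfying $F|_\Om \in \Np(\Om)$; the remaining ``bad'' prime ends in $\mathcal{F} := \bdP\Om \setminus \bdyRSP\Om$ can be safely ignored because Theorem~\ref{prop:exceptional} gives $\CapPp(\mathcal{F}) = 0$.

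To build $F$, I use that $d_M$ extends to a metric on $\Om \cup \bdySP\Om$ (Remark~\ref{rem:SP}) and that $\bdyRSP\Om \subset \bdySP\Om$. Applying the McShane--Whitney extension in $d_M$ to the $L$-Lipschitz function $f|_{\bdyRSP\Om}$, I obtain an $L$-Lipschitz extension $\tilde{f}$ on $\Om \cup \bdySP\Om$, and set $F = \tilde{f}$ on $\Om \cup \bdyRSP\Om$ and $F = f$ on $\mathcal{F}$. For any rectifiable curve $\gamma$ joining $x, y \in \Om$ the image $|\gamma|$ is a compact connected subset of $\Om$ of $d$-diameter $\le \ell(\gamma)$, so $d_M(x,y) \le \ell(\gamma)$, which makes the constant function $L$ an upper gradient of $\tilde{f}|_\Om$; $L^p$-integrability on the bounded domain $\Om$ follows from the $d_M$-Lipschitz estimate anchored at any fixed point of $\bdyRSP\Om$, giving $\tilde{f}|_\Om \in \Np(\Om)$.

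The key technical step is $\CapPp$-quasicontinuity of $F$ on $\clOmP$. Since $\CapPp$ is an outer capacity (Proposition~6.3 in~\cite{ES}) and $\CapPp(\mathcal{F})=0$, for each $\eps > 0$ I can pick an open $U \supset \mathcal{F}$ with $\CapPp(U) < \eps$; on $\clOmP \setminus U$ we have $F = \tilde{f}$. Continuity of $\tilde{f}$ within $\Om$ in the prime end topology is automatic, as that topology agrees there with the original one and quasiconvexity makes $d_M$ locally biLipschitz equivalent to $d$ inside $\Om$. The hard part, and the main obstacle, is continuity at each $[E_k] \in \bdyRSP\Om$: given a sequence $y_n$ converging to $[E_k]$ in the prime end topology, I exploit the rectifiable accessing curve $\gamma$ from Definition~\ref{SP-R+NR} to produce a $d_M$-Cauchy tail $\{\gamma(1-1/n)\}$ (by rectifiability) whose $d_M$-completion limit must coincide with $[E_k]$, and then link $y_n$ to $\gamma(1-1/n)$ inside the acceptable set $E_n$, using condition~(b) of Definition~\ref{def-chain} (positive $d_M$-separation of successive chain boundaries) together with the singleton impression to force the $d_M$-diameter of these connecting sets to tend to zero. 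This yields $d_M(y_n, [E_k]) \to 0$, and the $L$-Lipschitz property of $\tilde{f}$ in $d_M$ then delivers $\tilde{f}(y_n) \to f([E_k])$, securing quasicontinuity.

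Theorem~\ref{thm77} applied to $F$ now yields resolutivity and $P_{\clOmP}F = H_\Om F = H_\Om \tilde{f}$; since $F|_{\bdP\Om} = f$, this is the first claim $P_{\clOmP}f = H_\Om f$ (with $H_\Om f$ interpreted as the $p$-harmonic extension of any Sobolev extension of $f|_{\bdyRSP\Om}$, well-defined modulo $\Npz(\Om)$). For the \emph{furthermore} part, if $f_1 = f_2$ on $\bdyRSP\Om$, choosing identical McShane extensions $\tilde{f}_1 = \tilde{f}_2$ makes the corresponding $F_1, F_2$ coincide on $\Om \cup \bdyRSP\Om$ and differ only on the $\CapPp$-null set $\mathcal{F}$; Theorem~\ref{thm78} applied to the perturbation $h = F_2 - F_1$ then gives $P_{\clOmP}f_1 = P_{\clOmP}f_2$.
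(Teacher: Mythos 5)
Your proposal is correct and follows essentially the same route as the paper: McShane extension of $f|_{\bdyRSP\Om}$ in the metric $d_M$, membership in $\Np(\Om)$ via the constant upper gradient, $\CapPp$-quasicontinuity from Theorem~\ref{prop:exceptional} together with the outer-capacity property, and then Theorems~\ref{thm77} and~\ref{thm78}. The one step you sketch by hand --- that prime-end convergence to a point of $\bdySP\Om$ forces $d_M$-convergence --- is precisely the content of \cite[Theorem~9.6]{abbs} behind Remark~\ref{rem:SP}, which the paper also invokes implicitly, so you may simply cite it.
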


Since the closure of $\bdyRSP\Om$ under the Mazurkiewicz metric is $\bdySP\Om$, we know that if 
$f_1=f_2$ on $\bdyRSP\Om$, then the continuous extensions of $f_1$ and $f_2$ to $\bdySP\Om$ also satisfy this 
equality.

\begin{proof} 
Recall from the discussion following Definition~\ref{Mazur} that the Mazurkiewicz metric
$d_M$ extends to a metric, also denoted $d_M$, to
$\Om\cup \bdySP\Om$. Therefore by the McShane extension theorem, see~\cite{mcs} 
and~\cite[Chapter~6]{heinonen}, $f\vert_{\partial_{RSP}\Om}$ has a Lipschitz extension, denoted 
$F$, to $\Om\cup\bdySP\Om$. Moreover, the Lipschitz constants of $f\vert_{\partial_{RSP}\Om}$ and $F$ are the same.

From~\cite[Lemma~6.2.6]{HKST} it follows that
\[
\Lip_M F(x)=\limsup_{y\to x}\frac{|F(y)-F(x)|}{d_M(y,x)}
\]
is a bounded upper gradient of $F$ in $\Om$ with respect to the metric $d_M$, and hence by~\cite[Proposition~5.3]{bbs1}
we know that $F\vert_\Om\in N^{1,p}(\Om)$.

We extend $F$ by $f$ to $\bdP\Om\setminus\bdySP\Om$. Observe that by the discussion following 
Definition~\ref{SP-R+NR} and by Definition~\ref{NSP-R+NR}, every prime end in $\partial_{P}\Om$ falls into one 
of the classes described by these definitions. Therefore, 
Theorem~\ref{prop:exceptional} implies 
that $F$ is $\CapPp$-quasicontinuous on $\Om\cup\bdP\Om$. 
Hence, by Theorem~\ref{thm77} we know that $F\vert_{\bdP\Om}=f$ is resolutive, and that
$P_{\overline{\Om}^P}f=HF$. This proves the first assertion of the theorem.

Let $f_1$ and $f_2$ be as in 
assumptions of the theorem and define $h:=f_2-f_1$. Then $h$ equals zero $\CapPp$ quasi-everywhere 
on $\bdP\Om$, again by Lemma~\ref{lem:Excp:NR} and Proposition~\ref{prop:exceptional}. Applying 
Theorem~\ref{thm78} with $f:=f_1$ and $h=[f_2-f_1]\chi_{\bdP\Om}$ we get the desired conclusion and the 
proof of the theorem is completed.

\end{proof}

\begin{remk}
Let us compare Theorem~\ref{thm:main} to results in~\cite{ES}. In 
\cite[Theorem~7.7]{ES} (see Theorem~\ref{thm77} above), a corresponding 
function $f$ is defined on the whole $\clOmP$, is $\CapPp$-quasicontinuous and is assumed to 
belong to $\Np(\Om)$. Here, we require $f$ to be merely defined on $\bdP \Om$, but then also to be 
Lipschitz continuous on $\partial_{RSP}\Om$ with respect to the Mazurkiewicz metric $d_M$. See~\cite{bbs1} for
more on this metric and its relation to $\bdySP\Om$.  Thus the advantage in the above theorem is 
that we do not a priori have to verify whether the function is a quasicontinuous function on $\clOmP$,
but the disadvantage is that we need $f$ to be Lipschitz on $\partial_{RSP}\Om$ with respect to 
$d_M$, not merely $\CapPp$-quasicontinuous. The examples in the next section illustrate the strength
and limitations of these results.
\end{remk}

In light of the above remark, the following proposition is a strengthening of Theorem~\ref{thm:main}.
Its proof is similar to the latter part of the proof of Theorem~\ref{thm:main}, and hence is omitted here.

\begin{prop}\label{prop:ext}
In the setting of Theorem~\ref{thm:main}, if $F:\Om\cup\partial_{RSP}\Om\to\R$ is $\CapPp$-quasicontinuous
and $F\vert_{\Om}\in \Np(\Om)$, then $F$ is resolutive.
\end{prop}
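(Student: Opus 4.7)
The plan is to reduce this to Theorem~\ref{thm77} by extending $F$ to all of $\clOmP$ in a harmless way, exploiting that $\CapPp$ vanishes on the set where we have no data. Concretely, set $\widetilde F := F$ on $\Om \cup \partial_{RSP}\Om$ and $\widetilde F := 0$ on $\mathcal F := \bdP\Om \setminus \partial_{RSP}\Om$, where $\mathcal F$ is the ``bad'' set from Theorem~\ref{prop:exceptional} consisting of all non-singleton prime ends and all rectifiably inaccessible singleton prime ends.

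The first step is to verify that $\widetilde F$ is $\CapPp$-quasicontinuous on the whole of $\clOmP$. Given $\eps>0$, I would combine two ingredients: (i) by Theorem~\ref{prop:exceptional} we have $\CapPp(\mathcal F)=0$, and since $\CapPp$ is an outer capacity on $\clOmP$ there exists an open (in the prime end topology) set $V \supset \mathcal F$ with $\CapPp(V)<\eps/2$; (ii) by the hypothesized $\CapPp$-quasicontinuity of $F$ on $\Om\cup\partial_{RSP}\Om$ there exists an open set $U$ with $\CapPp(U)<\eps/2$ such that $F\vert_{(\Om\cup\partial_{RSP}\Om)\setminus U}$ is continuous. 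Then $W:=U\cup V$ is open, $\CapPp(W)<\eps$, and $\clOmP\setminus W\subset \Om\cup\partial_{RSP}\Om$, so $\widetilde F\vert_{\clOmP\setminus W}=F\vert_{\clOmP\setminus W}$ is continuous, as required.

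Since $\widetilde F\vert_\Om = F\vert_\Om \in \Np(\Om)$ by assumption, Theorem~\ref{thm77} now applies directly to $\widetilde F$, yielding that $\widetilde F$ is resolutive on $\clOmP$ with $P_{\clOmP}\widetilde F = H_\Om F$. Finally, to interpret this as resolutivity of $F$ itself (which is only defined on $\Om\cup\partial_{RSP}\Om$), I would observe that any other extension $\widetilde F'$ of $F$ to $\clOmP$ differs from $\widetilde F$ only on $\mathcal F$, a set of $\CapPp$-capacity zero; hence by Theorem~\ref{thm78} applied with $f=\widetilde F$ and $h=\widetilde F'-\widetilde F$, we get $P_{\clOmP}\widetilde F'=P_{\clOmP}\widetilde F=H_\Om F$, so the Perron solution depends only on $F$.

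The only real subtlety is bookkeeping in the quasicontinuity step, namely confirming that the two topologies/capacities match (the exceptional open sets $U$ and $V$ both need to be open in the prime end topology and both are measured by the same $\CapPp$); this is automatic here because $\CapPp$ is defined on $\clOmP$ and outer capacity is available by~\cite[Proposition~6.3]{ES}. Everything else is a routine rerun of the argument in the last part of the proof of Theorem~\ref{thm:main}, which is why the authors say the proof is omitted.
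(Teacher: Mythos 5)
Your proposal is correct and follows essentially the route the paper intends (and sketches in the proof of Theorem~\ref{thm:main}): extend $F$ by an arbitrary value on $\mathcal F=\bdP\Om\setminus\partial_{RSP}\Om$, use Theorem~\ref{prop:exceptional} together with the outer-capacity property of $\CapPp$ to upgrade the hypothesized quasicontinuity on $\Om\cup\partial_{RSP}\Om$ to $\CapPp$-quasicontinuity on all of $\clOmP$, apply Theorem~\ref{thm77}, and invoke Theorem~\ref{thm78} to see that the resulting Perron solution is independent of the chosen extension. The explicit bookkeeping with the open sets $U$, $V$, $W$ is a welcome precise version of the step the paper leaves implicit.
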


The following observation is a consequence of Proposition~\ref{prop:ext}. The proof is the same as 
that of~\cite[Corollary 7.9]{ES} and, thus is omitted. 
\begin{cor}\label{cor:ext}
 In the setting of Theorem~\ref{thm:main}, let $F:\Om\cup\partial_{RSP}\Om\to\R$ be a bounded
 $\CapPp(\Om)$-quasicontinuous
and $F\vert_{\Om}\in \Np(\Om)$. Moreover, let $u$ be a bounded $p$-harmonic function on $\Om$. If $E\subset \bdP \Om$ is 
such that $\CapPp(E)=0$ and, for all $x\in \bdP \Om\setminus [E\cup\mathcal{F}]$,
\[
    \lim_{\Omega\ni y\overset{\clOmP} \to x} u(y) =f(x),
\]
then $u=P_{\clOmP}f$.
\end{cor}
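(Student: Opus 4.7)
The plan is to apply Proposition~\ref{prop:ext} and then squeeze $u$ between the upper and lower Perron solutions of $F$. By Theorem~\ref{prop:exceptional} together with the hypothesis on $E$, the exceptional set $E\cup\mathcal F\subset\bdP\Om$ satisfies $\CapPp(E\cup\mathcal F)=0$, and Proposition~\ref{prop:ext} gives that $F$ is resolutive, so $P_{\clOmP}F=\uHp_{\clOmP}F=\lHp_{\clOmP}F$. The task therefore reduces to proving $\lHp_{\clOmP}F\le u\le\uHp_{\clOmP}F$ on $\Om$.

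The core of the argument is the construction of a single nonnegative $p$-superharmonic function $\phi$ on $\Om$, finite $\mu$-almost everywhere, such that
\[
\liminf_{\Om\ni y\overset{\clOmP}\to x}\phi(y)=+\infty\quad\text{for every }x\in E\cup\mathcal F.
\]
Using the outer capacity property (\cite[Proposition~6.3]{ES}) I would choose open sets $U_k\subset\clOmP$ containing $E\cup\mathcal F$ with $\CapPp(U_k)<2^{-kp}$, pick admissible functions $w_k\in\Np(\Om)$ of $\Np$-norm less than $2^{-k}$, pass from each $w_k$ to a $p$-superharmonic function $\psi_k$ by solving an obstacle problem with obstacle $w_k$ (so that $\psi_k\ge w_k$ and $\|\psi_k\|_{\Np(\Om)}$ is controlled by $\|w_k\|_{\Np(\Om)}$), and then set $\phi:=\sum_k\psi_k$. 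Since partial sums are $p$-superharmonic with uniformly bounded $\Np$-norms, the limit $\phi$ is $p$-superharmonic, lies in $\Np(\Om)$ (hence is finite $\mu$-a.e.), and its prime-end $\liminf$ at every $x\in E\cup\mathcal F$ equals $+\infty$.

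With $M$ an upper bound for $|u|$ on $\Om$ and for $|F|$ on $\bdP\Om\setminus(E\cup\mathcal F)$, for each $\delta>0$ the function $u+\delta\phi$ is $p$-superharmonic, bounded below, and its prime-end $\liminf$ is at least $F$ at every point of $\bdP\Om\setminus(E\cup\mathcal F)$ (by the boundary hypothesis on $u$ together with $\phi\ge 0$) and $+\infty$ on $E\cup\mathcal F$. Hence $u+\delta\phi\in\UU_F(\clOmP)$, so $\uHp_{\clOmP}F\le u+\delta\phi$ on $\Om$; letting $\delta\to 0^+$ yields $\uHp_{\clOmP}F\le u$ on $\{\phi<\infty\}$, and then everywhere on $\Om$ by continuity of the $p$-harmonic function $\uHp_{\clOmP}F$ (valid because $F$ is bounded and resolutive). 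The symmetric argument with $u-\delta\phi\in\LL_F(\clOmP)$ gives $u\le\lHp_{\clOmP}F$, and combining with resolutivity yields $u=P_{\clOmP}F$.

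The main technical obstacle I anticipate is verifying that the superposition $\phi=\sum_k\psi_k$ is genuinely $p$-superharmonic on $\Om$, and more delicately, that its $\liminf$ in the \emph{prime end} topology (rather than the metric topology) really equals $+\infty$ on all of $E\cup\mathcal F$. The former is standard in the nonlinear potential theory of~\cite{BBbook}; the latter needs the admissibility condition~(2) of Definition~\ref{def:capP}, formulated through the prime end topology, to pass to the monotone limit — which is precisely where Theorem~\ref{prop:exceptional} does its work by packaging $\mathcal F$ together with $E$ into the prime end capacity-zero regime. The rest of the proof is routine Perron machinery.
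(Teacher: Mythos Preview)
Your overall plan—resolutivity via Proposition~\ref{prop:ext}, then sandwiching $u$ between the upper and lower Perron solutions—is sound, but the execution has a genuine gap at the nonlinear step. For $p\ne 2$ the class of $p$-superharmonic functions is \emph{not} closed under addition: neither the partial sums $\sum_{k\le N}\psi_k$ nor the function $u+\delta\phi$ (with $u$ $p$-harmonic and $\phi$ $p$-superharmonic) is $p$-superharmonic in general, so you cannot place $u+\delta\phi$ in $\UU_F(\clOmP)$, and the squeeze collapses. This is not a technicality absorbed by \cite{BBbook}; it is the nonlinearity of the $p$-Laplacian. (An easy way to see it: there exist $p$-harmonic $u,v$ with $u+v$ not $p$-harmonic, hence not both $p$-super and $p$-sub; replacing one of them by its negative produces two $p$-superharmonic functions whose sum is not $p$-superharmonic.)

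The route the paper follows—namely that of \cite[Corollary~7.9]{ES}—avoids barriers altogether by perturbing the \emph{boundary data} rather than $u$. After extending $F$ by $0$ to $\mathcal F$, Theorem~\ref{prop:exceptional} keeps it $\CapPp$-quasicontinuous and Theorem~\ref{thm77} gives resolutivity with $P_{\clOmP}F=H_\Om F$. With $|u|\le M$, define $h_\pm:\clOmP\to\eR$ to vanish on $\Om$ and on $\bdP\Om\setminus(E\cup\mathcal F)$, and so that $F+h_\pm=\pm M$ on $E\cup\mathcal F$; since $h_\pm=0$ $\CapPp$-q.e., Theorem~\ref{thm78} yields $P_{\clOmP}(F+h_\pm)=P_{\clOmP}F$. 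Now $u$ \emph{itself}, being $p$-harmonic and hence both $p$-super- and $p$-subharmonic, satisfies $\liminf u\ge F+h_-$ and $\limsup u\le F+h_+$ at \emph{every} prime end (on $E\cup\mathcal F$ this is just $|u|\le M$). Thus $u\in\UU_{F+h_-}(\clOmP)\cap\LL_{F+h_+}(\clOmP)$, whence
\[
P_{\clOmP}F=\uHp_{\clOmP}(F+h_-)\le u\le \lHp_{\clOmP}(F+h_+)=P_{\clOmP}F,
\]
with no superposition of $p$-superharmonic functions needed.
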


\section{Examples}

Some examples related to the Dirichlet problem for the prime end boundary can be found in~\cite{bbs1,ES, BjComb}.
The examples we provide here are geared more towards illustrating the properties of results from the previous sections.

\begin{example}\label{ex:Cp-cts-non-res}
In Example~\ref{ex2}, consider the function $f:\partial\Om\to\R$ given by
$f(x,0)=f(x,1)=0=f(0,y)=f(1,y)$ for $x, y\in[0,1]$, and for $n\in\N$:
\[
f(x,y)=\begin{cases} 4x&\text{ when }0\le x\le \frac14,\ \ \  y=\frac{1}{2^n},\\
                                 1&\text{ when }\frac14\le x\le \frac34, \ \ \ y=\frac{1}{2^n},\\
                                  -1&\text{ when }\frac14\le x\le \frac34, \ \ \  y=\frac{3}{2^{n+2}},\\
                                  4(x-1)&\text{ when }\frac34\le x\le 1,  \ \ \ y=\frac{3}{2^{n+2}}.\end{cases}
\]
 There is a natural pull-back $f_0$ of $f$ to $\bdP\Om$ by setting $f_0([E_k])=f((x,y))$ if $I([E_k])=\{(x,y)\}$, 
and by setting $f_0([F_k])=0$ where $[F_k]$ is the prime end with the non-singleton impression.
It is easy to see that $f_0$ is $\CapPp$-quasicontinuous on $\bdP\Om$ as it is continuous on $\bdySP\Om$, 
but neither Theorem~\ref{thm:main} nor~\cite[Theorem~7.7]{ES} tells us that $f_0$ is resolutive. Note that $f_0$ is not
continuous on $\bdP\Om$ as it fails to have a continuous extension to $\bdP\Om\setminus\bdySP\Om$. On the other hand, if 
$f_1(x,y)=\sqrt{y}f(x,y)$ for $(x,y)\in\partial\Om$ and $f_0$ is constructed in a corresponding manner from $f_1$, then it is clear
that such $f_0$ is continuous on $\bdP\Om$, and again $f_0$ is not known to be resolutive. Both constructions of $f_0$
do not give functions that are Lipschitz (with respect to $d_M$) on $\bdySP\Om$.
\end{example}

\begin{example}
We modify Example~\ref{ex2} as follows. Let $\alpha>0$ and 
\[
X=\{(x,y,z)\in\R^3\, :\, 0\le x\le 2,\, 0\le y\le 2,\, |z|\le y^\alpha\},
\]
equipped with the $3$-dimensional Lebesgue measure and the Euclidean metric.
It can be seen that the measure on this space is doubling and supports a $1$-Poincar\'e inequality.
Let $\Om$ be obtained by removing
\[
\bigcup_{n\in\N}\left([0, \tfrac34]\times\{2^{-n}\}\cup[\tfrac14,1]\times\{3\cdot 2^{-n-2}\}\right)\times[-1,1]
\]
from
\[
\{(x,y,z)\in\R^3\, :\, x,y\in(0,1),\, |z|<y^\alpha\}.
\]
Suppose that $\alpha>p-1$. Then
an extension $F$ of the function given in Example~\ref{ex:Cp-cts-non-res}, by $F(x,y,z)=f(x,y)$, yields a function
on $\bdP\Om$ that is easily
seen to have an extension to $\Om$ such that this extension is $\CapPp$-quasicontinuous on
$\clOmP$ and the restriction of the extension belongs to $\Np(\Om)$. Thus Proposition~\ref{prop:ext} now
tells us that $F$ is resolutive, even though it is not continuous on the boundary of $\Om$. 
\end{example}

\begin{example}
 Perhaps one of the important applications of Theorem~\ref{prop:exceptional} is 
 that we obtain a handy, geometric, way of verifying which prime ends form a set of $\CapPp$-capacity zero. 
 For instance, 
 consider the Euclidean planar domain $\Om$ in Example~\ref{ex3}. Since the prime end $[E_k]$, with impression 
 $I[E_k]=[\tfrac12,\tfrac34]\times \{0\}$, is finitely away, it holds that $\CapPp(\{[E_k]\})=0$. Moreover, 
 there is no prime end associated with points in $[(\tfrac14, \tfrac12)\cup (\tfrac34, 1)]\times\{0\}$. (However, there is 
 an end $[F_n]$ with impression $[\tfrac14, \tfrac12]\times \{0\}$ given e.g. by a chain with acceptable sets 
 $F_n:=\Om\cap [\tfrac14-\tfrac{1}{2n}, \tfrac12+\tfrac{1}{2n}]\times (0,\tfrac1n)$ for $n=3,4,\ldots$.) The remaining 
 prime end boundary consists of singleton prime ends only (in fact all of them are rectifiably accessible). 
 Therefore, Theorem~\ref{thm:main} allows us to conclude that any function on $\bdP \Om$ Lipschitz 
 continuous on $\bdySP \Om$ with respect to the Mazurkiewicz distance $\dM$ is resolutive. 
 Furthermore, 
 one can perturb the boundary data at a point $[E_k]\in \bdP \Om\setminus \bdySP \Om$ and that data is 
 resolutive as well. 
 Note that $C_p(I([E_k]))>0$, and so the now-classical theory of Perron solutions from~\cite{BBS2} does not
 yield such perturbation result.
\end{example}

\begin{example}
 The domain in Example~\ref{ex1} above has exactly one rectifiably inaccessible (singleton) prime end $[E_k]$ with 
 impression $\{(0,0)\}$, and thus, Proposition~\ref{prop:exceptional} gives us that $\CapPp(\{[E_k]\})=0$. The remaining 
 prime ends are rectifiably accessible. 
 Therefore, a boundary 
 data $f:\bdP \Om\to \eR$ can be perturbed at $[E_k]$ freely, remaining resolutive, provided 
 Lipschitz continuity of $f|_{\partial_{RSP} \Om}$ with respect to the Mazurkiewicz distance. Observe that for 
 $p>2$ it holds that the Sobolev capacity $C_p(\{(0,0)\})>0$, and so the resolutivity cannot be inferred 
 from~\cite{BBS2}. However, see \cite[Example 10.1]{bbs1} for a similar discussion in the context of the 
 Perron method with respect to the Mazurkiewicz boundary.
\end{example}

The following is an example of a domain with infinitely many rectifiably inaccessible prime ends.

\begin{figure}[ht]
\centering
\subfloat[\empty]
{\definecolor{ududff}{rgb}{0.30196078431372547,0.30196078431372547,1.}
\begin{tikzpicture}[line cap=round,line join=round,>=triangle 45,x=1.4cm,y=1.4cm]
\clip(-3.6513150905105403,0.433485892604058) rectangle (5.257890478323242,6.668427395244932);
\draw [line width=1.1pt] (-3.,6.)-- (-3.0016480173258384,1.1896313102665907);
\draw [line width=1.1pt] (-3.0016480173258384,1.1896313102665907)-- (4.201634413358811,1.189631310266591);
\draw [line width=1.1pt] (4.201634413358811,1.189631310266591)-- (4.193456397618059,5.999213851892812);
\draw [line width=1.1pt] (4.193456397618059,5.999213851892812)-- (-3.,6.);
\draw [line width=0.7pt] (0.5927197326301196,3.5972490098144467)-- (-2.0108070858161837,3.5864459939702713)-- (-2.0000040699720083,2.1280388550065763)-- (-0.16349137646216771,2.1280388550065763)-- (-0.16349137646216771,1.5554790152652735)-- (-1.362626135165652,1.5554790152652735)-- (-1.362626135165652,1.3286156825375877)-- (-0.4983848676316092,1.3178126666934122);
\draw [line width=0.7pt] (0.6,3.4)-- (-1.8,3.4)-- (-1.8,2.2)-- (-0.10769163111706392,2.194119866911877)-- (-0.10769163111706392,1.526800814207951)-- (-1.3406851126031294,1.526800814207951)-- (-1.336265781128269,1.3500275552135335)-- (-0.4700768120556208,1.336769560788952)-- (-0.46565748058076034,1.2439635998168828);
\draw [line width=0.7pt,dash pattern=on 4pt off 4pt] (-0.5000270850698577,1.3178331944113904)-- (-0.47818706972230657,1.3167404257994855);
\draw [line width=0.7pt,dash pattern=on 4pt off 4pt] (-0.47818706972230657,1.3167404257994855)-- (-0.47471907984648076,1.2478141270174476);
\draw (-0.9620270688895503,4.1293789279044315) node[anchor=north west] {$\tfrac12$};
\draw (-0.970510243176005,2.7072708502360055) node[anchor=north west] {$\tfrac13$};
\draw (-0.9620270688895503,1.9562168114017923) node[anchor=north west] {\footnotesize{$\tfrac14$}};
\draw (-2.28413514655797,3.1530134685055647) node[anchor=north west] {$\tfrac14$};
\draw (-0.4511167626236418,2.0063126331139933) node[anchor=north west] {\footnotesize{$\tfrac18$}};
\draw (-0.541356316904144,1.1245878422943718) node[anchor=north west] {$\bf{x_1}$};
\draw [line width=0.7pt] (2.410191160618336,2.3890088677941614)-- (1.196689328880752,2.3890088677941614)-- (1.200211193983707,1.8012087099589449)-- (2.0657934270293863,1.789193152885558)-- (2.0657934270293863,1.5146011774760892)-- (1.5335074439279544,1.5146011774760892)-- (1.5335074439279544,1.4005398953829253)-- (1.9728546045831046,1.3920909115241724)-- (1.9728546045831046,1.31182556486602)-- (1.626446266374236,1.3160500567953966)-- (1.6222217744448597,1.25268267785475);
\draw [line width=0.7pt] (2.401510557664984,2.3186961467295695)-- (1.2411269343351687,2.3154906671071114)-- (1.2443324139576268,1.8314632441159462)-- (2.0852399340356644,1.8271859054994581)-- (2.0852399340356644,1.4986083287211336)-- (1.5475675356711316,1.4964747080927028)-- (1.5475675356711316,1.4132635035839063)-- (1.982471345339084,1.4100180552771344)-- (1.9838890254640793,1.2951859651525284)-- (1.6408104352152555,1.2994390055275138);
\draw (-2.459254923698221,4.084307061620281) node[anchor=north west] {$\bf{S_1}$};
\draw (0.6359194487606271,2.7922468948079553) node[anchor=north west] {$\bf{S_2}$};
\draw [line width=0.7pt] (3.2009019973152104,1.7956050626881706)-- (2.592224680450071,1.8007633450344853)-- (2.5973829627963854,1.599590333528211)-- (2.9945707034626206,1.6047486158745257)-- (2.999728985808934,1.5193662993867865)-- (2.6654369138008245,1.5193662993867865)-- (2.6654369138008245,1.4538188342871574)-- (2.9243494009443607,1.4538188342871574)-- (2.927626774199342,1.3817166226775655);
\draw [line width=0.7pt] (2.927626774199342,1.3817166226775655)-- (2.7105303088158017,1.381315621315147)-- (2.7116760039536607,1.3469463250527896)-- (2.895022239313069,1.3484615831962556)-- (2.895507217183601,1.322762519289198)-- (2.732722934060181,1.3217322390162651);
\draw [line width=0.7pt] (3.2030033111573877,1.7592125249634927)-- (2.617384447389464,1.7608302566313598)-- (2.6190021790573312,1.6184698698590476)-- (3.0088755110133247,1.6200876015269148)-- (3.013728706016926,1.506846384776212)-- (2.6728274940976866,1.507704602044648)-- (2.6728274940976866,1.4597648692226666)-- (2.9305035580158383,1.4597648692226666)-- (2.9344985357510036,1.3718753590490336)-- (2.7227647157872505,1.3708766146152425);
\draw (1.697213041875339,1.1095638868663216) node[anchor=north west] {$\bf{x_2}$};
\draw (2.778937832694955,1.1245878422943718) node[anchor=north west] {$\bf{x_3}$};
\draw (2.323530590418101,2.231192855973742) node[anchor=north west] {$\bf{S_3}$};
\draw [line width=0.4pt] (1.6408104352152555,1.2994390055275138)-- (1.6379530222979317,1.258650940769234);
\draw [line width=0.4pt] (1.6379530222979317,1.258650940769234)-- (1.8938808242019822,1.257466089834493);
\draw [line width=0.4pt] (1.6222217744448597,1.25268267785475)-- (1.6225402217400176,1.2434167313326392);
\draw [line width=0.4pt] (1.6225402217400176,1.2434167313326392)-- (1.8797783619559447,1.242954072807071);
\draw (1.6671651310192386,2.9124385382323577) node[anchor=north west] {$\frac14$};
\draw (1.6671651310192386,2.2065521873944966) node[anchor=north west] {\footnotesize{$\frac16$}};
\draw (1.6821890864472888,1.8058335245529874) node[anchor=north west] {\tiny{$\tfrac18$}};
\draw (3.878937832694955,1.1245878422943718) node[anchor=north west] {$(\frac32, 0)$};
\end{tikzpicture}
}
\caption{Example~\ref{ex44}}
\end{figure}

\begin{example}\label{ex44} 
 Consider $\Om=(0,\frac32)\times (0, 1)$ with the tunnel $S_1$ 
 removed from $\Om$, see $S_1$ in Figure 3. The height and width 
 of the double-slit tunnel are both equal to $\tfrac12$. Following the idea of Example~\ref{ex1}, the 
 $n$-th horizontal sides of $S_1$ to be of length $\frac1n$ for $n=4, \ldots$, while the length of the 
 $n$-th vertical sides is $\frac{1}{2^n}$ for $n\ge 2$. The separation between the two sinuous curves that form $S_1$
 narrows as $n$ increases (see 
 Figure~3). The limiting point, denoted $x_1$, forms the impression 
 of three singleton prime ends. Since any curve approaching $x_1$ from inside the tunnel $S_1$ is of 
 length at least $\sum_{n=4}^{\infty}\frac{1}{n}=\infty$, this prime end is rectifiably inaccessible. 
 The other two singleton prime ends with the same impression  $\{x_1\}$, namely defined by two curves 
 approaching $x_1$ from the left- and right-hand sides of $S_1$, respectively. Existence of such prime ends is 
 guaranteed e.g. by Lemma 7.7. in \cite{abbs}.  Next, we scale the dimensions of $S_1$ by $\frac12$ and 
 shift it to the right in a distance $\frac18$ obtaining new tunnel, denoted $S_2$, and the corresponding 
 impression $\{x_2\}$ gives a rectifiably inaccessible prime end. We repeat this procedure, obtaining a family 
 $S_n$ for $n\in\N$ with $2^{-(n+2)}$ the distance between $S_n$ and $S_{n+1}$. 
 Moreover, associated with $\{S_n\}$ is a sequence of points $\{x_n\}_{n=1}^{\infty}$ such that $x_n\to (\tfrac32,0)$ 
 for $n\to \infty$. Each of $x_n$ is an impression of a rectifiably inaccessible prime end. By 
 Proposition~\ref{prop:exceptional} 
 the collection of rectifiably inaccessible prime ends of the domain
 $\Om\setminus \bigcup_{n=1}^{\infty} S_n$, has $\CapPp$-capacity zero. 
\end{example}

\section{The Kellogg property}

The aim in this section is to prove a variant of the Kellogg property for the Perron solutions $P_{\overline{\Om}^P}f$.
Note that every continuous function on $\partial\Om$ is resolutive for the classical Perron solution as 
considered in~\cite{BBS2}, and the corresponding Kellogg property is proved there. 
Since we do not know that every continuous function on $\bdP\Om$ is resolutive, in this paper we only consider
continuous data $f$ on $\bdP\Om$ that are resolutive, see
Definition~\ref{def:Res-Kellogg} below. See~\cite{Bj17} for similar restrictions for domains whose
prime end boundary consists only of singleton prime ends. Since the method of~\cite{Bj17} rests crucially on
the compactness of $\bdySP\Om$, and such a compactness property is unavailable in our, more general, setting,
our proof of the Kellogg property is different than the one in~\cite{Bj17}, and is more in the spirit of~\cite{BBS}.

\begin{deff} 
A function $u\in N^{1,p}(W)$ for some non-empty open set $W\subset X$ is a \emph{$p$-superminimizer}
in $W$ if whenever $0\le \pip\in N^{1,p}_0(W)$, we have
\[
\int_{W\cap\{\pip\ne0\}}g_u^p\, d\mu\le \int_{W\cap\{\pip\ne0\}}g_{u+\pip}^p\, d\mu.
\]
\end{deff}

Here $N^{1,p}_0(W)$ consists of functions $f\in N^{1,p}(X)$ for which $f=0$ $C_p$-almost everywhere in $X\setminus W$,
see Definition~\ref{def:zero-boundary}.

\begin{lem}[{\cite[Lemma 3.10]{BBS}}]\label{lem310}
 Let $B\subset X$ be a ball and let
 $u\in \Np(B)$ be a $p$-superminimizer in $B$ with $0\leq u \leq 1$. 
 Then there exists a representative of $u$ that is lower semicontinuous at every point of $B$.
\end{lem}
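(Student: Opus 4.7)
The plan is to construct an everywhere-lsc representative of $u$ via the essential liminf
\[
u^{*}(x) := \lim_{r\to 0^{+}}\essinf_{B\cap B(x,r)} u, \qquad x\in B,
\]
which is well-defined in $[0,1]$ since the inner quantity is monotone non-decreasing as $r\to 0^{+}$ and $u$ is bounded. Once $u^{*}$ is at hand, I would establish (a)~that $u^{*}$ is lsc everywhere on $B$, and (b)~that $u^{*}=u$ $\mu$-a.e.\ on $B$; together these make $u^{*}$ the desired representative in the $\Np(B)$-equivalence class of $u$.

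Step~(a) is purely set-theoretic. Given $x_n\to x$ in $B$ and $\eps>0$, pick $r>0$ with $\essinf_{B\cap B(x,r)} u > u^{*}(x)-\eps$; for $n$ large, $B(x_n,r/2)\subset B(x,r)$, so $u^{*}(x_n)\ge \essinf_{B\cap B(x_n,r/2)} u \ge \essinf_{B\cap B(x,r)} u > u^{*}(x)-\eps$, yielding $\liminf_n u^{*}(x_n)\ge u^{*}(x)$. Step~(b) is the substantive part. The inequality $u^{*}(x)\le u(x)$ at every Lebesgue point of $u$ follows from $\essinf_{B\cap B(x,r)} u \le \vint_{B\cap B(x,r)} u\,d\mu \to u(x)$ and doubling of $\mu$. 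For the reverse direction, I would use the $p$-superminimizer hypothesis: for every constant $k\in\R$ the truncation $(k-u)_{+}$ is a nonnegative $p$-subminimizer, and the local boundedness estimate for $p$-subminimizers in our metric setting gives, for $\mu$-a.e.\ $y\in B\cap B(x,r/2)$,
\[
(k-u)_{+}(y) \le C\Big(\vint_{B\cap B(x,r)}((k-u)_{+})^{s}\,d\mu\Big)^{1/s},
\]
for a fixed exponent $s>0$ depending only on the data of $(X,d,\mu)$ and $p$. Taking $k=u(x)$ at a Lebesgue point of $((u(x)-u)_{+})^{s}$ (where the right-hand side tends to $0$ as $r\to 0^{+}$, since $(u(x)-u)_{+}(x)=0$), we conclude $\essinf_{B\cap B(x,r/2)} u \ge u(x)-o(1)$, whence $u^{*}(x)\ge u(x)$ for $\mu$-a.e.\ $x\in B$.

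The main obstacle is the local boundedness estimate used in step~(b). Everything else is a soft combination of the definition of $u^{*}$ with the Lebesgue differentiation theorem; but the boundedness estimate itself must be derived from a Caccioppoli-type inequality for truncations of $u$, the $p$-Poincar\'e inequality, and a Moser-style iteration adapted to the metric measure setting (where smooth cutoffs are replaced by McShane--Lipschitz cutoffs with explicit upper gradients). With (a) and (b) in place, $u^{*}$ is lsc everywhere on $B$ and equal to $u$ $\mu$-a.e., hence is the required representative of $u$.
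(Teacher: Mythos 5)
Your essliminf-regularization argument is correct and is essentially the proof of the cited result \cite[Lemma~3.10]{BBS} (the present paper quotes the lemma without reproving it): one sets $u^*(x)=\lim_{r\to0^+}\essinf_{B\cap B(x,r)}u$, checks lower semicontinuity directly, and uses the local sup-estimate for the nonnegative subminimizers $(k-u)_+$ (available in this setting from \cite{KiSh01}) to conclude $u^*=u$ $\mu$-a.e. The one point worth making explicit is the upgrade from $\mu$-a.e.\ equality to equality outside a set of $p$-capacity zero, which is what is actually needed for $u^*$ to be an $N^{1,p}(B)$-representative; this follows from the standard fact that two Newtonian functions agreeing $\mu$-a.e.\ agree quasi-everywhere, once one verifies that the lsc-regularization $u^*$ itself lies in $N^{1,p}(B)$.
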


See Definition~\ref{def:H-Omega-f} for the definition of $H_\Om f$ for functions $f\in N^{1,p}(\Om)$.

\begin{lem}\label{lem311}
 Let $\Om \subset X$ be a domain,  $B\subset X$ a ball with center in $\partial\Om$, and $W$ be a given 
 connected component of $B\cap \Om$. Suppose that $F: \clOmP\to \R$ is Lipschitz quasicontinuous 
 on $\Om\cup\bdySP\Om$ with  respect to the Mazurkiewicz metric and
 $F([E_k])=1$ whenever $[E_k]\in\bdySP\Om$ such that there is some $k_0\in\N$ with $E_k\subset W$
 for $k\ge k_0$. Suppose in addition that $F$
satisfies $0\leq F\leq 1$ and is such that $F|_{\Om}\in \Np(\Om)$. Define a function $\Psi: B \to \R$ as follows:
 \begin{equation}
\Psi(x)=
\begin{cases}
&  H_{\Om} F(x),\quad x\in B\cap W,\\
& 1,\quad x\in B\setminus W.
\end{cases}
\end{equation}
Then $\Psi\in N^{1,p}(B)$ and is a $p$-superminimizer in $B$.
\end{lem}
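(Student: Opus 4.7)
The plan is to use the decomposition $B=W\sqcup(B\setminus W)$ together with the basic geometric fact that $\partial W\cap B\subset\partial\Om$ (since any boundary point of $W$ lying inside the open set $B\cap\Om$ would have to belong to another open component of $B\cap\Om$ disjoint from $W$, which is impossible) and, symmetrically, $\partial W\cap\Om\subset\partial B$. These inclusions localize the ``transition boundary'' of $\Psi$ to a subset of $\partial\Om$ inside $B$.

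To show $\Psi\in N^{1,p}(B)$, I would first introduce the auxiliary function $\widetilde F:=F\chi_W+\chi_{B\setminus W}$ on $B$, and verify $\widetilde F\in N^{1,p}(B)$ directly. Any rectifiable curve $\gamma$ in $B$ crossing from $W$ into $B\setminus W$ must do so at a point $x_0\in\partial W\cap B\subset\partial\Om$; the portion of $\gamma$ preceding the crossing accesses $x_0$ rectifiably from within $W$, and by Remark~\ref{rem:SP} yields a singleton prime end $[E_k]\in\bdySP\Om$ whose chain lies eventually in $W$. The hypothesis $F([E_k])=1$ combined with the $d_M$-Lipschitz continuity of $F$ on $\Om\cup\bdySP\Om$ forces $F(\gamma(t))\to 1$ as $\gamma(t)\to x_0$ from within $W$. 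Hence $\widetilde F$ is continuous along every rectifiable curve in $B$, and the function $g_F\chi_W$ (with $g_F$ the minimal upper gradient of $F$ on $\Om$) is an $L^p$ upper gradient of $\widetilde F$. Then $\Psi-\widetilde F=(H_\Om F-F)\chi_W$, and since $H_\Om F-F\in N^{1,p}_0(\Om)$ has a $C_p$-quasicontinuous representative vanishing $C_p$-q.e.\ on $\partial\Om\supset\partial W\cap B$, the product $(H_\Om F-F)\chi_W$ lies in $N^{1,p}_0(W)$. Combining, $\Psi\in N^{1,p}(B)$.

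For the $p$-superminimizer inequality, take $0\le\pip\in N^{1,p}_0(B)$. Since $\Psi=1$ and $\Psi+\pip=1+\pip$ on $B\setminus W$, the energy decomposition gives
\begin{equation*}
\int_{B\cap\{\pip\ne 0\}}g_\Psi^p\,d\mu=\int_{W\cap\{\pip\ne 0\}}g_{H_\Om F}^p\,d\mu,
\end{equation*}
\begin{equation*}
\int_{B\cap\{\pip\ne 0\}}g_{\Psi+\pip}^p\,d\mu=\int_{W\cap\{\pip\ne 0\}}g_{H_\Om F+\pip}^p\,d\mu+\int_{(B\setminus W)\cap\{\pip\ne 0\}}g_\pip^p\,d\mu.
\end{equation*}
It thus suffices to prove $\int_{W\cap\{\pip\ne 0\}}g_{H_\Om F}^p\,d\mu\le\int_{W\cap\{\pip\ne 0\}}g_{H_\Om F+\pip}^p\,d\mu$. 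To apply the minimization property from Definition~\ref{def:H-Omega-f}, I would pass to the truncated test function $\psi:=\min(H_\Om F+\pip,1)-H_\Om F=\min(\pip,\,1-H_\Om F)$ on $W$, extended by $0$ to $X\setminus W$. Then $\psi\ge 0$, $\psi\le\pip$ (so $\psi=0$ $C_p$-q.e.\ on $\partial B$) and $\psi\le 1-H_\Om F$; using that $H_\Om F-F\in N^{1,p}_0(\Om)$ vanishes $C_p$-q.e.\ on $\partial\Om$ and that $F\to 1$ at the rectifiably accessible crossings of $\partial W\cap B$, one checks $\psi\in N^{1,p}_0(\Om)$. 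Applying the minimization property of $H_\Om F$ with test $\psi$ yields $\int_{W\cap\{\psi\ne 0\}}g_{H_\Om F}^p\,d\mu\le\int_{W\cap\{\psi\ne 0\}}g_{H_\Om F+\psi}^p\,d\mu$, and since $H_\Om F+\psi=\min(H_\Om F+\pip,1)\le H_\Om F+\pip$ the truncation estimate $g_{H_\Om F+\psi}\le g_{H_\Om F+\pip}$ closes the argument.

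The main obstacle is the delicate Newton-Sobolev handling at $\partial W\cap B\subset\partial\Om$, where both the glued function $\widetilde F$ and the truncated test function $\psi$ must be shown to have appropriate continuity or vanishing properties along rectifiable curves. The $d_M$-Lipschitz hypothesis on $F$ together with the singleton prime end value $F=1$ at rectifiably accessible prime ends of $W$ are exactly the ingredients that guarantee these boundary behaviors and thereby allow $\Psi$ to be realized as a $p$-superminimizer on the entire ball $B$.
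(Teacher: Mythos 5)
Your proposal is correct and follows essentially the same route as the paper: glue $F|_W$ with the constant $1$ by observing that any curve in $B$ leaving $W$ must cross $\partial W\cap B\subset\partial\Om$ at a point that is rectifiably accessible from within $W$, where the hypothesis $F=1$ on singleton prime ends eventually contained in $W$ together with the $d_M$-Lipschitz bound yields the upper gradient inequality, and then correct by the $N^{1,p}_0(\Om)$-function $H_\Om F-F$; for the superminimizer property, both arguments truncate the test function at height $1-\Psi$ (your $\min(\pip,1-H_\Om F)$ on $W$ is literally the paper's $\min\{\pip,1-\Psi\}$) so that it becomes admissible for the minimization property of $H_\Om F$ in $\Om$. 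The only cosmetic difference is that you decompose the energy over $W$ and $B\setminus W$ before truncating, whereas the paper truncates first to force the test function to vanish on $B\setminus W$.
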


\begin{proof}
We first show that $\Psi\in N^{1,p}(B)$. Since $0\le\Psi\le 1$, it suffices to show that $\Psi$ has a $p$-weak
upper gradient in $L^p(B)$.  
As $F$ is $L$-Lipschitz on $\Om\cup\bdySP\Om$ with respect to the Mazurkiewicz metric $d_M$ for
some $L>0$, the constant function $g=L$ is an upper gradient of $F$ on $\Om\cup\bdySP\Om$
when considered with respect to the metric $d_M$. Then $g$ is an upper gradient, also with respect to
the original metric $d$, for $F$ on $\Om$. Furthermore, if $\gamma:[a,b]\to B\cap\overline{W}$ with $\gamma$ lying
entirely in $\Om$ or entirely in $B\cap\partial W$, then the pair $F,g$ satisfies the upper gradient gradient
inequality along $\gamma$. If $\gamma$ lies partially in $\Om$ and intersects $B\cap\partial W$, then by
splitting it into two parts if necessary we may assume that $\gamma(a)\in W$, and set 
$t_0:=\sup\{t\in[a,b]\, :\, \gamma([a,t])\subset W\}$. Note then that $\gamma(t_0)\in B\cap \partial W\subset \partial \Om$
but $\gamma([a,t_0))\subset W\subset\Om$. Therefore there is some singleton prime end
$[E_k]\in\bdP\Om$ such that $E_k\subset W$ for each $k\in\N$ and $I([E_k])=\{\gamma(t_0)\}$, and so
$\lim_{t\to t_0^-}F(\gamma(t))=1=\Psi(\gamma(t_0))$, and the pair $F,g$ satisfies the upper gradient inequality on 
$\gamma\vert_{[a,t]}$ for each $t<t_0$. It follows now that 
\[
|F(\gamma(a))-1|\le \int_{\gamma\vert_{[a,t_0]}}g\, ds.
\]
It follows that the extension of $F\vert_W$ by $1$ to $B\setminus W$ has $g\chi_W$ as an upper gradient in 
$B$. Note that the zero extension of
$F-H_\Om F$ from $\Om$ to $X$, denoted $h$, is in $N^{1,p}_0(\Om)$; let $g_h\in L^p(X)$ be an upper gradient 
of $h$ (in $X$). Then by the above argument,
the function $g_h+g\, \chi_W$ is an upper gradient of $\Psi$ in $B$, that is, $\Psi\in N^{1,p}(B)$.

 The rest of the proof follows the steps of the corresponding proof of~\cite[Lemma~3.11]{BBS}. 
 By Theorem~\ref{thm77} we get that $F$ is resolutive and $P_{\clOmP}F=H_{\Om}F$. The comparison 
 principle allows us to infer that $0\leq H_{\Om}F\leq 1$. Let $\phi\in \Npz(B\cap \overline{W})$ be nonnegative. 
 Our goal is to show that
\begin{equation}\label{eq1: lem311}
 \int_{B\cap \{\supp \phi\not =0\}} g_{\Psi}^p\leq  \int_{B\cap \{\supp \phi\not =0\}} g_{\Psi+\phi}^p.
\end{equation} 
  Since $\Psi\leq 1$ in $\overline{W}$, then we may assume that $\Psi+\phi\leq 1$, 
  as otherwise one can replace test function $\phi$ by $\min\{\phi, 1-\Psi\}$ and decrease 
  the right-hand side of~\eqref{eq1: lem311}. Hence, $\phi\equiv 0$ in $B\setminus W$ and thus 
  $\phi\in \Npz(B\cap \Om)$. As $H_{\Om}F$ is a $p$-minimizer in $B\cap \Om$ $\phi\in \Npz(B\cap\Om)\subset\Npz(\Om)$,
  it follows that
\begin{align*}
 \int_{B\cap \{\supp \phi\not =0\}} g_{\Psi}^p&=\int_{B\cap \Om\cap \{\supp \phi\not =0\}} g_{H_{\Om}F}^p \\
 & \leq \int_{B\cap \Om\cap \{\supp \phi\not =0\}} g_{H_{\Om}F+\phi}^p \\
 & = \int_{B \cap \{\supp \phi\not =0\}} g_{\Psi+\phi}^p.  
\end{align*}
 Thus $\Psi$ is a $p$-superminimizer in $B$.
\end{proof}

\begin{deff}\label{def:Res-Kellogg}
We say that a point $[E_k]\in\bdP\Om$ is  \emph{resolutively regular} if whenever $\phi\in C(\bdP\Om)$ is
resolutive, we have
\[
\lim_{\Om\ni y\to[E_k]}P_{\overline{\Om}^P}\pip=\pip([E_k]).
\]
We say that $[E_k]$ is resolutively irregular if it is not resolutively regular. Let $\Irr(\Om)$ denote the collection
of all resolutively irregular points in $\bdP\Om$.
\end{deff}

\begin{thm}[The Kellogg property for prime end boundary]\label{thm:Kellogg}
 Let $\Om\subset X$ be a bounded domain. Then 
$\CapPp(\Irr(\Om))=0$. 
\end{thm}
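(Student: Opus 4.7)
The plan is to adapt the BBS approach to the prime end setting, combining Theorem~\ref{prop:exceptional} with a countable family of $d_M$-Lipschitz resolutive test functions, and using the superminimizer technology of Lemmas~\ref{lem310}--\ref{lem311} to show that each associated ``irregularity set'' has $\CapPp$-capacity zero.

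By Theorem~\ref{prop:exceptional} and countable subadditivity of $\CapPp$, it suffices to show that $\Irr(\Om)\cap\bdyRSP\Om$ has $\CapPp$-capacity zero, i.e.~to control resolutive irregularity at rectifiably accessible singleton prime ends. On this set the Mazurkiewicz metric is well-behaved: by Remark~\ref{rem:SP}, $\Om\cup\bdySP\Om$ is complete under $d_M$, and since $\Om$ is separable (the doubling measure forces this) so is $\bdyRSP\Om$. Fix a countable $d_M$-dense subset $\{\eta_i\}_{i\in\N}$ of $\bdyRSP\Om$, and for each $i,k\in\N$ define
\[
F_{i,k}(\xi) = \begin{cases} \min\{1,\, k\, d_M(\xi,\eta_i)\}, & \xi\in\bdyRSP\Om,\\ 1, & \xi\in\mathcal{F}. \end{cases}
\]
Each $F_{i,k}$ is $k$-Lipschitz on $\bdyRSP\Om$ in $d_M$, hence resolutive by Theorem~\ref{thm:main}, with Perron solution $u_{i,k} = P_{\clOmP}F_{i,k} = H_{\Om}\widetilde F_{i,k}$ for the McShane extension $\widetilde F_{i,k}$ of $F_{i,k}|_{\bdyRSP\Om}$ to $\Om\cup\bdySP\Om$.

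For each rational $q\in(0,1]$ set
\[
A_{i,k,q} = \{\xi\in\bdyRSP\Om : F_{i,k}(\xi)\ge q,\ \liminf_{\Om\ni y\to\xi} u_{i,k}(y) < q\},
\]
and similarly $B_{i,k,q}$ using $\limsup u_{i,k}(y) > q \ge F_{i,k}(\xi)$. To bound $\CapPp(A_{i,k,q})$, cover $A_{i,k,q}$ by countably many balls $B_j=B(x_j,r_j)$ centered at impressions of points in $A_{i,k,q}$, with radii small enough that Lemma~\ref{lem311} applies to the component $W_j$ of $B_j\cap\Om$ containing the associated tail; after rescaling $(F_{i,k}-q)_+/(1-q)$, this lemma produces a $p$-superminimizer $\Psi_j$ in $B_j$, and Lemma~\ref{lem310} provides a lower semicontinuous representative. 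This representative is admissible for $\CapPp(A_{i,k,q}\cap B_j)$ up to a rescaling, and standard Caccioppoli-type energy estimates combined with a Vitali covering argument---exactly as in~\cite{BBS}---yield $\CapPp(A_{i,k,q})=0$. The symmetric argument gives $\CapPp(B_{i,k,q})=0$.

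The main obstacle, and the place where the argument departs most substantially from~\cite{BBS}, is to verify the covering
\[
\Irr(\Om)\cap\bdyRSP\Om\ \subset\ \bigcup_{i,k\in\N,\,q\in\Q\cap(0,1]}\bigl(A_{i,k,q}\cup B_{i,k,q}\bigr).
\]
Given $\xi_0\in\bdyRSP\Om$ resolutively irregular for some continuous resolutive $\pip:\bdP\Om\to\R$ with, WLOG, $\liminf_{y\to\xi_0}P_{\clOmP}\pip(y) < \pip(\xi_0)$, we must produce $(i,k,q)$ such that $\xi_0\in A_{i,k,q}$. The delicate point is that $\pip$ is given to be continuous in the prime end topology while our $F_{i,k}$ is built from $d_M$; one checks that on $\bdyRSP\Om$ the $d_M$-topology refines the prime end topology (since each rectifiably accessible singleton prime end is approached through a rectifiable curve, the two notions of convergence coincide at such points), so $\pip|_{\bdyRSP\Om}$ is $d_M$-continuous at $\xi_0$. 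A local approximation of $\pip$ from above on a $d_M$-neighborhood of $\xi_0$ by an affine combination $\alpha F_{i,k}+\beta$ with rational $\alpha,\beta$ and $\eta_i$ close to $\xi_0$, combined with the comparison principle~\cite[Proposition~7.3]{ES}, transfers the irregularity of $P_{\clOmP}\pip$ at $\xi_0$ to some $F_{i,k}$, placing $\xi_0$ in the corresponding $A_{i,k,q}$. Countable subadditivity of $\CapPp$ then finishes the proof.
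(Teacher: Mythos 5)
Your overall architecture---reduce to the rectifiably accessible singleton prime ends via Theorem~\ref{prop:exceptional}, build a countable family of $d_M$-Lipschitz resolutive test functions, kill their irregularity sets with Lemmas~\ref{lem310}--\ref{lem311}, and then transfer irregularity of a general continuous resolutive $\pip$ to the test family by comparison---is the same as the paper's, and your covering step (including the observation that prime-end continuity implies $d_M$-continuity on $\bdySP\Om$, and the need for \emph{global} domination on all of $\bdP\Om$, handled by setting $F_{i,k}=1$ on $\mathcal{F}$) is essentially sound. The genuine gap is in the step $\CapPp(A_{i,k,q})=0$. Lemma~\ref{lem311} requires the boundary datum to be \emph{identically} $1$ at every singleton prime end whose chain eventually lies in the component $W$ of $B\cap\Om$; only then is the extension $\Psi$ (equal to $H_\Om F$ on $B\cap W$ and to $1$ on $B\setminus W$) in $N^{1,p}(B)$ and a $p$-superminimizer, and only then does the lower semicontinuity from Lemma~\ref{lem310} together with \cite[Lemma~6.11]{ES} force the exceptional set to be $C_p$-null. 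Your rescaled datum $(F_{i,k}-q)_+/(1-q)$ equals $1$ only where $d_M(\cdot,\eta_i)\ge 1/k$, whereas a point of $A_{i,k,q}$ only satisfies $d_M(\xi,\eta_i)\ge q/k$; worse, a single component $W$ of $B(x,r)\cap\Om$ can contain singleton prime ends at arbitrarily different $d_M$-distances from $\eta_i$ no matter how small $r$ is (small $d$-diameter does not control $d_M$-diameter---this is the whole point of the Mazurkiewicz metric). So the level sets of $d_M(\cdot,\eta_i)$ do not localize to components of $B\cap\Om$, the hypothesis of Lemma~\ref{lem311} fails, and the invocation of ``Caccioppoli-type estimates and a Vitali covering, exactly as in \cite{BBS}'' does not describe what \cite{BBS} actually does at this point: the mechanism there is the lsc regularization of a superminimizer that is constant $\equiv 1$ off $W$, not an energy estimate.

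This is precisely why the paper abandons distance functions to a dense set (which work well for resolutivity) and instead takes as its test family the ball cutoffs $\phi_{j,k}^l=\min\{1,2^j\dist(\cdot,X\setminus 2B_{j,k})\}\chi_{\overline{B_{j,k}^l}}$ attached to the individual components $B_{j,k}^l$ of $2B_{j,k}\cap\Om$: by construction these are identically $1$ on every singleton prime end ending in the relevant component of $B_{j,k}\cap\Om$, so Lemma~\ref{lem311} applies verbatim and each irregularity set $I_{j,k,l}$ is $\CapPp$-null via \eqref{eq:cap-cap-irr}. The sandwiching of a general continuous resolutive $f$ between such cutoffs then runs much as in your final paragraph. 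To repair your argument you would either have to switch to such component-adapted cutoffs, or prove a strengthened version of Lemma~\ref{lem311} for data that is merely $\ge q$ on the prime ends of $W$---which again forces you to localize in $d_M$ rather than in $d$, i.e., to components.
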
 

Before the proof of the theorem, let us discuss some auxiliary definitions and results. For a set 
$A\subset \overline{\Om}$ we define ${\rm Pr}(A)$ as a subset of $\bdP \Om$ consisting of prime ends 
whose impressions belong to $A$:
  \[
    {\rm Pr}(A):=\{[E_m]\in \bdP \Om:  I[E_m]\subset A\}.
  \] 
  
\begin{proof}
 The proof follows the steps of the corresponding proof of Theorem 3.9 in \cite{BBS}. 
 However, the setting of prime end boundary requires several modifications of the original argument.
 
 The compactness of $\partial \Om$ allows us to find a finite covering of it by balls $B_{j,k}:=B(x_{j, k}, 2^{-j})$ 
 for $j=1,2,\ldots$ and $1\leq k \leq N_j$, with $x_{j,k}\in\partial\Om$. Note 
 that $2B(x_{j,k},  2^{-j})\cap\Om$ has at most countably many connectedness components which we 
 index with $l=1,2,\ldots$ and denote by $B_{j,k}^l$.
 
We introduce the functions $\phi_{j,k}^l: X\to \R$ by 
\[
 \phi_{j,k}^l(y)=\min\{1, 2^j \dist(y, X\setminus 2B(x_{j,k}, 2^{-j}))\}\chi_{\overline{B_{j,k}^l}}(y).
\]  
 It is easy to see that  $0\leq \phi_{j,k}^l \leq 1$ and that every $\phi_{j,k}^l$ is Lipschitz continuous on $\Om$, and hence
 is Lipschitz continuous on $\Om$ also with respect to the Mazurkiewicz metric $d_M$. 
 We extend  $\phi_{j,k}^l$ to functions on $\clOmP$ as follows:
 \begin{equation*} 
 \widehat{\phi_{j,k}^l}(y)=
 \begin{cases}
 \phi_{j,k}^l(y) &\text{ if } y\in \Om,\\
 \inf_{z\in I[y]} \phi_{j,k}^l(z) &\text{ if } y=[E_n]\in \bdP \Om \text{ with } E_n\subset B_{j,k}^l \hbox{ for large enough }n,  \\
 0 &\text{ otherwise}.
 \end{cases}
 \end{equation*}
 
Observe that these functions satisfy the following properties:
\begin{itemize}
\item[(i)]  $\widehat{\phi_{j,k}^l}$ is Lipschitz continuous with respect to $d_M$ on 
$\Om\cup \bdySP \Om$. Indeed, since we know that $\phi_{j,k}^l=\widehat{\phi_{j,k}^l}$ is 
Lipschitz on $\Om$ with respect to the metric $d_M$, it has a unique Lipschitz (with respect to
$d_M$) extension to $\Om\cup\bdySP\Om$, and as $\widehat{\phi_{j,k}^l}$ is continuous on
$\Om\cup\bdySP\Om$, it follows that it is Lipschitz on that set with respect to $d_M$.
\item[(ii)]  $\widehat{\phi_{j,k}^l}|_{\Om}=\phi_{j,k}^l|_\Om\in \Np(\Om)$. 
\item[(iii)] functions $\widehat{\phi_{j,k}^l}$ are $\CapPp$-quasicontinuous on $\clOmP$. 
This follows immediately from part (i) above and Theorem~\ref{prop:exceptional}. 
\item[(iv)] If $[E_m]\in\bdySP\Om$ with $E_m\subset B_{j,k}^l$ for large $m\in\N$, 
then $\widehat{\phi_{j,k}^l}([E_m])=\phi_{j,k}^l(x)$ where $\{x\}=I([E_m])$.
\end{itemize}
 
  The above properties allow us to employ Theorem~\ref{thm77} and conclude that 
$\widehat{\phi_{j,k}^l}$ is
$\CapPp$-quasicontinuous  and is resolutive with
 \[
  P_{\clOmP} \widehat{\phi_{j,k}^l}= H_{\Om} \widehat{\phi_{j,k}^l}=H_{\Om} \phi_{j,k}^l.
 \]
Set 
  \begin{equation*} 
   I_{j,k,l}=\{[E_m]\in \bdySP \Om \cap {\rm Pr}(B_{j,k} \cap \overline{B_{j,k}^l})\, : \, 
      \liminf_{\Om\ni y\to [E_m]} H_{\Om} \phi_{j,k}^l(y)< \widehat{\phi_{j,k}^l}([E_m])=1\}.
  \end{equation*}
 
 Note that $I_{j,k,l}\subset\Irr(\Om)$.  
 Moreover, by Lemma~\ref{lem311} 
 applied to $B=B_{j,k}$, component $W=B_{j,k}^l$ and $F=\widehat{\phi_{j,k}^l}$ we get $\Psi$, a 
 $p$-superminimizer on $B_{j,k}^l$ such that $\Psi=H_{\Om} \phi_{j,k}^l$ on $B_{j,k}\cap B_{j,k}^l$ and 
 $\Psi\equiv 1$ on $B_{j,k}\setminus B_{j,k}^l$. 
 Therefore, the assertion of Lemma~\ref{lem310} together with~\cite[Lemma~6.11]{ES} allows us to infer that 
 \begin{equation}\label{eq:cap-cap-irr}
0\le \CapPp(I_{j,k,l})\leq \Cp(P^{-1}(I_{j,k,l}))=0, 
 \end{equation}
 where
 \[
 P^{-1}(I_{j,k,l})=\{x\in\partial\Om\, :\, \{x\}=I([E_m])\text{ for some }[E_m]\in I_{j,k,l}\}.
 \]  
 Let $f\in C(\bdP \Om)$ and let $[E_m]\in \bdySP \Om$ for which
\[
\lim_{\Omega\ni y\overset{\clOmP} \to [E_m]} P_{\overline{\Om}^P}f(y) \not =f([E_m]).
\]
We should interpret the above statement to mean that either the limit on the left-hand side does not exist,
or, if it exists, fails to equal the value on the right-hand side.
 Let $x_0=I([E_m])$.  
 By adding a constant to $f$, and by scaling $f$ if necessary, we may assume that $f\ge 0$ on $\bdP\Om$,
 $f([E_m])>1$ and that 
 \[
 \liminf_{\Omega\ni y\overset{\clOmP} \to [E_m]} P_{\overline{\Om}^P}f(y)<1. 
 \]
 As $f$ is continuous
 on $\bdP\Om$, there is some open set $U\subset\overline{\Om}^P$ with $[E_m]\in U$ such that
 $f>1$ on $\bdP\Om\cap U$. For each $j\in\N$ there is some $k_j$ such that $x_0\in B_{j,k_j}$.
 
 As $[E_m]\in\bdySP\Om$, we have $\lim_{m\to\infty}\text{diam}(E_m)=0$. Thus $E_m\subset B_{j,k_j}$
 for sufficiently large $m\in\N$, and as $E_m$ is connected and $x_0\in\partial E_m\cap\partial\Om$,
 it follows that
 there is some $l$ with $x_0\in B_{j,k_j}\cap\partial B_{j,k_j}^l$ and
 for sufficiently large $m\in\N$ we have $E_m\subset B_{j,k_j}^l\subset U$. It follows that 
 $\widehat{\phi_{j,k_j}^l}([E_m])=1$ and $\widehat{\phi_{j,k_j}^l}\le f$ on $\bdP\Om$. It follows that
 \[
 H_\Om\widehat{\phi_{j,k_j}^l}= P_{\overline{\Om}^P}\widehat{\phi_{j,k_j}^l}\le P_{\overline{\Om}^P}f
 \]
 on $\Om$. It follows that
 \[
 \liminf_{\Omega\ni y\overset{\clOmP} \to [E_m]} P_{\overline{\Om}^P}\widehat{\phi_{j,k_j}^l}(y)\le 
 \liminf_{\Omega\ni y\overset{\clOmP} \to [E_m]} P_{\overline{\Om}^P}f(y)<1,
 \]
 That is, $[E_m]\in I_{j,k_j,l}$. Thus the collection of all $[E_m]\in\bdySP\Om$ for which
 \[
 \lim_{\Omega\ni y\overset{\clOmP} \to [E_m]} P_{\overline{\Om}^P}f(y) \not =f([E_m])
 \]
 is a subset of $\bigcup_{j\in\N}\bigcup_k\bigcup_l I_{j,k,l}$, and this holds true for each 
 resolutive $f\in C(\bdP\Om)$. 
 Hence $\Irr(\Om)\cap\bdySP\Om\subset \bigcup_{j\in\N}\bigcup_k\bigcup_l I_{j,k,l}$,
 and so by~\eqref{eq:cap-cap-irr},
 Theorem~\ref{prop:exceptional}, and by the countable subadditivity of $\CapPp$ the desired conclusion follows.
\end{proof}

\end{document}